\newtheorem{theorem}{Theorem}
\newtheorem{lemma}[theorem]{Lemma}
\theoremstyle{definition}
\newtheorem{definition}[theorem]{Definition}
\theoremstyle{remark}
\newfont{\cmbsy}{cmbsy10}
\newfont{\cmmib}{cmmib10}
\newcommand{\Orden}{\mathop{\hbox{\cmbsy O}}\nolimits}
\def\R{\mathbf{R}}
\def\C{\mathbf{C}}
\renewcommand{\Re}{\operatorname{Re}}
\renewcommand{\Im}{\operatorname{Im}}
\begin{document}

\title{Computation of the Secondary Zeta function}
\author{J. Arias de Reyna}
\address{Univ.~de Sevilla \\
Facultad de Matem\'aticas \\
c/Tarfia, sn
 \\
41012-Sevilla \\
Spain} 
\email{arias@us.es}

\date{\today}

\begin{abstract}
The secondary zeta function $Z(s)=\sum_{n=1}^\infty\alpha_n^{-s}$, where $\rho_n=\frac12+i\alpha_n$ are the zeros of zeta with $\Im(\rho)>0$, extends to a meromorphic function on the hole complex plane. If we assume the Riemann hypothesis the numbers $\alpha_n=\gamma_n$, but we do not assume the RH.
We give an algorithm to compute this analytic prolongation of the Dirichlet series $Z(s)=\sum_{n=1}^\infty \alpha_n^{-s}$, for all values of $s$ and to a given precision. 
\end{abstract}

\maketitle
\section{Introduction}

The secondary zeta function is defined by 
\begin{equation}\label{def}
Z(s)=\sum_{n=1}^\infty \frac{1}{\alpha_n^s},\qquad \Re s>1
\end{equation}
where $\rho_n= \tfrac12 +i\alpha_n$ runs through the zeros $\rho$ of $\zeta(s)$ with $\Im\rho>0$. 

The function $Z(s)$ extends to a meromorphic function on $\C$, whose poles are double  at $s=1$, and simple at $s=-(2n-1)$  for $n=1$, $2$, \dots. Its properties have been studied by several authors:   Mellin \cite{M}, Cramér \cite{C}, Gui\-nand \cite{G}
Delsarte \cite{D}, Chakravarty \cite{C1}, \cite{C2}, \cite{C3}, \cite{C4}, Ivi\'c
\cite{I}. A detailed summary is found in Voros \cite{V1}, \cite{V2}.
 
Our objective is to  numerically compute the values of the function $Z(s)$ for all values of $s$. The computation of $Z(s)$ is difficult, as witnessed by Voros \cite{V2}*{p.~123} who says: \emph{Apart from very few tables for special values (where special formulae apply), we had not seen such functions tabulated or plotted before, and we wanted to view them.} In Section \ref{x-ray}, we present such plots. 

The difficulty with the computation arises from the fact that $Z(s)$ is represented as the sum of several terms among which there is a great deal of cancellation. Therefore, as explained in  Section \ref{Implem}, the computation of $Z(s)$  requires  high precision.  We have implemented our program, written in Python, by means of the open source library for  multiprecision  floating-point arithmetic, \texttt{mpmath} (see \cite{FJ}). 

The program  is now part of the standard version of \texttt{mpmath} (version 1.1.0). There the code may be  downloaded free of charge.  In \texttt{mpmath}  the function is called \texttt{secondzeta}. 

To compute $Z(s)$, we follow Delsarte in his proof of the prolongation of the Dirichlet series which yields $Z(s)$. In this way we express $Z(s)$ as a combination of four terms
\begin{displaymath}
Z(s)=A(s)-P(s)+E(s)-S(s),
\end{displaymath}
where $A(s)$ depends on the zeros of zeta, $P(s)$ on the primes, $E(s)$ is an entire function related to $\Gamma(s)$, and $S(s)$ the singular term which contains the poles of $Z(s)$ and  an asymptotic series with coefficients that depends on Euler and Bernoulli numbers. All these terms depend on a parameter $a>0$. 

Delsarte, \cite{D} assuming the Riemann Hypothesis, proves a \emph{modular} relation between a sum related to the zeros of zeta and another related to primes. Chakravarty \cite{C1} proves a similar relation without assuming the Riemann Hypothesis. In Theorem \ref{modularT} it is proved that the form given by Delsarte is equivalent to the one given by Chakaravarty, and hence the result of Delsarte does not depends on Riemann Hypothesis.

Throughout the paper we  arrange the zeros $\rho$ of $\zeta(s)$ with $\Im \rho>0$  in a sequence $(\rho_n)$ such that $\Im\rho_{n+1}\ge\Im\rho_n$. Furthermore, if multiple zeros of $\zeta(s)$ exist, the corresponding terms are repeated in the sequence according to their order of multiplicity. We then set $\rho_n= \tfrac12 +i\alpha_n$. Thus if the Riemann Hypothesis is true, then $\alpha_n$ are real numbers (in this case usually denoted by $\gamma_n$), while some of the $\alpha_n$ will be complex if the Riemann Hypothesis is not true. Our $\alpha_n$ are the zeros of $\Xi(t)$ with $\Re\alpha>0$.

\section{The Modular Equation.}

We shall need the following technical but interesting Lemma.

\begin{lemma}\label{lemmatransform}
We have
\begin{equation}\label{transform}
\int_{-\infty}^{+\infty}\Bigl\{\log|t|-\Re\frac{\Gamma'(\frac12+it)}{\Gamma(\frac12+it)}\Bigr\}e^{-2\pi i x t}\,dt=\frac{\pi}{2\sinh\pi|x|}-\frac{1}{2|x|}
\end{equation}
where the function in parentheses belongs to $\mathcal{L}^1(\R)$.
\end{lemma}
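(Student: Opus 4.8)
The plan is to recognize the left-hand side as a Fourier transform that can be computed by contrast with a known, simpler transform, exploiting the fact that $\log|t|$ is, up to normalization, its own Fourier transform in the distributional sense. First I would establish that the integrand $g(t) = \log|t| - \Re\frac{\Gamma'(\frac12+it)}{\Gamma(\frac12+it)}$ is in $\mathcal{L}^1(\R)$: near $t=0$ the $\log|t|$ singularity is integrable and the digamma term is bounded, while for $|t|\to\infty$ the asymptotic expansion $\frac{\Gamma'}{\Gamma}(\tfrac12+it) = \log(it) + O(|t|^{-1})$ (Stirling) gives $\Re\frac{\Gamma'}{\Gamma}(\tfrac12+it) = \log|t| + O(|t|^{-2})$, so the difference decays like $|t|^{-2}$. (One must be slightly careful: $\Re\log(it) = \log|t|$ exactly when $\Re$ is taken, and the next Stirling term $-\frac{1}{2(\frac12+it)}$ has real part $O(|t|^{-2})$, which is what secures integrability.) Hence the integral converges absolutely and defines a continuous function of $x$.

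Next I would split the computation using a regularization, since neither $\log|t|$ alone nor $\Re\psi(\tfrac12+it)$ alone is integrable. For $\sigma$ with $0<\Re\sigma<1$ consider $\int_{-\infty}^{\infty}|t|^{-\sigma}\,\{\,\cdots\,\}$... more directly, I would introduce the tempered-distribution identity: the Fourier transform of $\log|t|$ is $-\frac{1}{2|x|}$ up to a delta-at-zero and a constant (precisely, $\widehat{\log|t|} = -\frac12\,\mathrm{pf}\frac{1}{|x|} - (\gamma+\log 2\pi)\delta$ with suitable conventions). This already produces the $-\frac{1}{2|x|}$ term on the right. It then remains to show
\[
\int_{-\infty}^{+\infty}\Re\frac{\Gamma'(\tfrac12+it)}{\Gamma(\tfrac12+it)}\,e^{-2\pi i x t}\,dt = -\frac{\pi}{2\sinh\pi|x|}
\]
in the appropriate regularized sense, so that subtracting gives the stated clean $\mathcal{L}^1$ identity with the distributional pieces cancelling.

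For this remaining transform I would use the classical integral/series representation
\[
\Re\frac{\Gamma'(\tfrac12+it)}{\Gamma(\tfrac12+it)} = -\gamma + \sum_{n=0}^{\infty}\Bigl(\frac{1}{n+1} - \frac{n+\tfrac12}{(n+\tfrac12)^2+t^2}\Bigr),
\]
obtained from $\psi(z) = -\gamma + \sum_{n\ge 0}(\frac{1}{n+1}-\frac{1}{n+z})$ by averaging $z=\tfrac12+it$ and $z=\tfrac12-it$. The Fourier transform of $\frac{n+\frac12}{(n+\frac12)^2+t^2}$ is (a constant times) $e^{-2\pi(n+\frac12)|x|}$, a standard Lorentzian transform, so summing the geometric series $\sum_{n\ge0} e^{-(2n+1)\pi|x|} = \frac{1}{2\sinh\pi|x|}$ produces exactly the hyperbolic term; the constant and the $\frac{1}{n+1}$ pieces assemble into the $\log|t|$-type distribution that has already been accounted for. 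The main obstacle is the bookkeeping of the non-$\mathcal{L}^1$ pieces: one has to interchange sum and integral and combine the divergent constants ($-\gamma$, the harmonic tails, and the $\delta$- and $\frac{1}{|x|}$-contributions from $\log|t|$) so that everything cancels and only the honest $\mathcal{L}^1$ function $\frac{\pi}{2\sinh\pi|x|}-\frac{1}{2|x|}$ survives — this is cleanest if done via dominated convergence on $|t|^{-\sigma}$-regularized integrals and then analytically continuing in $\sigma$ to $0$, or equivalently by testing both sides against Schwartz functions and verifying the tempered-distribution identity before observing that the difference is in fact a genuine $\mathcal{L}^1$ function (hence equality holds pointwise a.e., and everywhere by continuity).
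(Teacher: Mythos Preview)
Your plan is correct and would yield a valid proof, but it follows a genuinely different route from the paper. You expand $\psi$ via its partial-fraction series, Fourier transform each Lorentzian $\frac{n+\frac12}{(n+\frac12)^2+t^2}\mapsto \pi e^{-(2n+1)\pi|x|}$, sum the geometric series to get $\frac{\pi}{2\sinh\pi|x|}$, and then arrange for the divergent constants and the distributional transform of $\log|t|$ to cancel. The paper instead starts from Binet's integral formula for $\psi$ and, after a short manipulation, obtains the closed identity
\[
\log s-\frac{\Gamma'(\tfrac12+s)}{\Gamma(\tfrac12+s)}=\int_0^{\infty}\Bigl(\frac{1}{2\sinh(u/2)}-\frac{1}{u}\Bigr)e^{-us}\,du,\qquad \Re s>0,
\]
recognizes the right-hand side as the Laplace transform of an $L^2(0,\infty)$ function, sets $s=2\pi i x$, takes real parts to get a cosine transform, and then simply inverts. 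The paper's argument never leaves honest $L^1$/$L^2$ Fourier analysis: no tempered distributions, no $\mathrm{pf}\,\frac{1}{|x|}$, no $|t|^{-\sigma}$ regularization, and no bookkeeping of divergent harmonic constants. Your approach has the virtue of explaining structurally \emph{why} the $\sinh$ appears (as a geometric series over the half-integer poles of $\psi$), but the cost is precisely the delicate cancellation you flag in your final paragraph; Binet's formula packages that cancellation in advance and makes the proof noticeably shorter.
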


\begin{proof}
By the Stirling expansion \cite{L}*{eq. (10), p.~33}, we have
\begin{equation}\label{eq3}
\Re\frac{\Gamma'(\frac12+it)}{\Gamma(\frac12+it)}\sim \log |t|+\Orden(t^{-2}),\qquad t\to\pm\infty.
\end{equation}
From \eqref{eq3} it is easily shown that $\log |t|-\Re\frac{\Gamma'(\frac12+it)}{\Gamma(\frac12+it)}\in\mathcal{L}^1(\R)$.

By diferentiating Binet's formula (see \cite{W}*{[p.~249}) we obtain
\begin{equation}
\frac{\Gamma'(z)}{\Gamma(z)}=\log z-\frac{1}{2z}-\int_0^{+\infty} \Bigl(\frac12-\frac1t+\frac{1}{e^t-1}\Bigr)e^{-tz}\,dt,\qquad \Re z>0,
\end{equation}
and hence 
\begin{equation}
\log\Bigl(s-\frac12\Bigr)-\frac{\Gamma'(s)}{\Gamma(s)}=\log\Bigl(s-\frac12\Bigr)-
\log s+\frac{1}{2s}+\int_0^{+\infty} \Bigl(\frac12-\frac1t+\frac{1}{e^t-1}\Bigr)e^{-ts}\,dt
\end{equation}
which  may be writen as 
\begin{equation}
\log\Bigl(s-\frac12\Bigr)-\frac{\Gamma'(s)}{\Gamma(s)}=\int_0^{+\infty} \Bigl(1-\frac1t e^{\frac{t}{2}}+\frac{1}{e^t-1}\Bigr)e^{-ts}\,dt.
\end{equation}
Therefore
\begin{equation}
\log s-\frac{\Gamma'(\frac12+s)}{\Gamma(\frac12+s)}=\int_0^{+\infty} \Bigl(e^{-\frac{t}{2}}-\frac1t +\frac{e^{-\frac{t}{2}}}{e^t-1}\Bigr)e^{-ts}\,dt=
\int_0^{+\infty} \Bigl(\frac{1}{2\sinh\frac{t}{2}}-\frac1t\Bigr)e^{-ts}\,dt.
\end{equation}
The function in brackets in the integrand is in $\mathcal{L}^2(\R)$ (taking it equal to $0$ for $t<0$).  Hence its Fourier transform is obtained when we put  $s=2\pi i x$
\begin{equation}
\mathcal{F}\Bigl[\Bigl(\frac{1}{2\sinh\frac{t}{2}}-\frac1t\Bigr)
\chi_{[0,+\infty)}(t)\Bigr]=\log(2\pi i x)-\frac{\Gamma'(\frac12+2\pi i x)}{\Gamma(\frac12+2\pi i x)}.
\end{equation}
Taking only the real parts, the cosine transform becomes
\begin{equation}
\mathcal{C}\Bigl[\Bigl(\frac{1}{2\sinh\frac{t}{2}}-\frac1t\Bigr)
\chi_{[0,+\infty)}(t)\Bigr]=\log(2\pi |x|)-\Re\frac{\Gamma'(\frac12+2\pi i x)}{\Gamma(\frac12+2\pi i x)}.
\end{equation}
The function on the right is both even and in $\mathcal{L}^1(\R)$ and hence
\begin{equation}
\int_{-\infty}^{+\infty}\Bigl(
\log(2\pi |x|)-\Re\frac{\Gamma'(\frac12+2\pi i x)}{\Gamma(\frac12+2\pi i x)}\Bigr)e^{-2\pi i x t}\,dx=
\frac{1}{4\sinh\frac{|t|}{2}}-\frac1{2|t|}.
\end{equation}
If $x$ is replaced with $x/2\pi$, and $t$ with $2\pi t$, then equation \eqref{transform}
is obtained.
\end{proof}

\begin{theorem}\label{modularT}
For $x>0$ 
\begin{equation}\label{modular}
\sum_{n=1}^\infty e^{-\alpha_n^2 x}=
-\frac{1}{2\sqrt{\pi x}}
\sum_{n=2}^\infty \frac{\Lambda(n)}{\sqrt{n}}e^{-\frac{1}{4x}\log^2n}+e^{\frac{x}{4}}
-\Phi(x),
\end{equation}
where
\begin{multline}\label{equality}
\Phi(x):=\frac{1}{2\pi}\int_0^{+\infty}e^{- xt^2}\Psi(t)\,dt=\\
=
\frac{C_0+\log 16\pi^2x}{8\sqrt{\pi x}}-\frac{1}{4\sqrt{\pi x}}\int_0^{+\infty}e^{-\frac{u^2}{16x}}\Bigl(\frac{1}{u}-\frac{e^{\frac34 u}}{e^u-1}\Bigr)\,du
\end{multline}
and 
\begin{equation}\label{defPsi}
\Psi(t):=\log2\pi+\frac{\pi}{2\cosh\pi t}-\Re\Bigl\{\frac{\Gamma'(\frac12+it)}
{\Gamma(\frac12+it)}\Bigr\}.
\end{equation}
and $C_0$ is Euler constant.
\end{theorem}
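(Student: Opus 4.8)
The plan is to obtain \eqref{modular} from a contour-integral version of the explicit formula and then to derive \eqref{equality} by a direct computation resting on Lemma \ref{lemmatransform}. Write $\psi=\Gamma'/\Gamma$, let $\xi(s)=\tfrac12 s(s-1)\pi^{-s/2}\Gamma(s/2)\zeta(s)$ be Riemann's entire function (so $\Xi(t)=\xi(\tfrac12+it)$), and set $G(s)=e^{x(s-\frac12)^2}$. Then $G$ is entire, $G(1-s)=G(s)$, $G(\tfrac12+it)=e^{-xt^2}$, and $|G(c+it)|=e^{x[(c-\frac12)^2-t^2]}$ decays like a Gaussian on each vertical line. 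The first step is to observe that under $\rho\mapsto\mu=-i(\rho-\tfrac12)$ the non-trivial zeros $\rho$ of $\zeta$ correspond bijectively, with multiplicity, to the zeros $\mu$ of $\Xi$, that $\Xi$ is even with no zero on the imaginary axis (because $\Xi(it)=\xi(\tfrac12-t)$ and $\xi$ has no real zeros), and that $G(\rho)=e^{-x\mu^2}$; hence $\mu\mapsto-\mu$ pairs the zeros with $\Re\mu>0$, which are the $\alpha_n$, with those with $\Re\mu<0$, giving
\[
\sum_{n\ge1}e^{-\alpha_n^2x}=\tfrac12\sum_\rho m_\rho\,G(\rho),
\]
a series that converges absolutely since $|G(\rho)|\le e^{x/4}e^{-x(\Im\rho)^2}$.

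Next I would integrate $\tfrac{\xi'}{\xi}(s)G(s)$ counter-clockwise around the rectangle with vertical sides $\Re s=-a$ and $\Re s=1+a$ (any $a>0$); since $\xi$ is entire with zero set exactly $\{\rho\}$, the residue theorem gives $\sum_\rho m_\rho G(\rho)=\tfrac1{2\pi i}\oint$. The horizontal sides vanish ($\xi'/\xi$ grows polynomially, $G$ decays like a Gaussian), and $\xi(s)=\xi(1-s)$ together with $G(1-s)=G(s)$ folds the left side onto the right, so $\sum_{n\ge1}e^{-\alpha_n^2x}=\tfrac1{2\pi i}\int_{(1+a)}\tfrac{\xi'}{\xi}(s)G(s)\,ds$. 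Inserting $\tfrac{\xi'}{\xi}(s)=\tfrac1s+\tfrac1{s-1}-\tfrac12\log\pi+\tfrac12\psi(\tfrac s2)+\tfrac{\zeta'}{\zeta}(s)$, I would treat the groups of terms separately. On $\Re s=1+a$ one has $\tfrac{\zeta'}{\zeta}(s)=-\sum_{n\ge2}\Lambda(n)n^{-s}$; interchanging sum and integral (absolute convergence) and using $\int_\R e^{-xt^2+i\beta t}\,dt=\sqrt{\pi/x}\,e^{-\beta^2/4x}$ produces $-\tfrac1{2\sqrt{\pi x}}\sum_{n\ge2}\tfrac{\Lambda(n)}{\sqrt n}e^{-\frac1{4x}\log^2n}$, with $a$ cancelling out. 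The terms $\tfrac1s+\tfrac1{s-1}$ and $-\tfrac12\log\pi+\tfrac12\psi(\tfrac s2)$ may be pushed to the line $\Re s=\tfrac12$: the only pole crossed is that of $\tfrac1{s-1}$ at $s=1$, contributing the residue $G(1)=e^{x/4}$; on the critical line the first pair is odd in $t$ (hence integrates to $0$) while the second, after taking real parts (the sum being real), contributes $-\tfrac{\log\pi}{4\sqrt{\pi x}}+\tfrac1{4\pi}\int_\R e^{-xt^2}\Re\psi(\tfrac14+\tfrac{it}2)\,dt$. Adding the three contributions yields the right-hand side of \eqref{modular} with $-\Phi(x)$ replaced by $-\tfrac{\log\pi}{4\sqrt{\pi x}}+\tfrac1{4\pi}\int_\R e^{-xt^2}\Re\psi(\tfrac14+\tfrac{it}2)\,dt$.

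The step I expect to be the real content is identifying this last expression with $-\Phi(x)$. From Legendre's duplication formula $\psi(z)+\psi(z+\tfrac12)=2\psi(2z)-2\log2$ and Gauss's reflection formula $\psi(1-z)-\psi(z)=\pi\cot\pi z$, both at $z=\tfrac14+\tfrac{it}2$, together with the elementary identity $\Re\cot\pi(\tfrac14+\tfrac{it}2)=1/\cosh\pi t$, one derives
\[
\Re\psi\Bigl(\tfrac14+\tfrac{it}2\Bigr)=\Re\psi\Bigl(\tfrac12+it\Bigr)-\log2-\frac{\pi}{2\cosh\pi t},
\]
so by \eqref{defPsi} the integrand $\Re\psi(\tfrac14+\tfrac{it}2)-\log\pi$ is exactly $-\Psi(t)$, the $\cosh$-terms cancelling. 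Since $\int_\R e^{-xt^2}\,dt=\sqrt{\pi/x}$ and $\Psi$ is even, the bracketed expression collapses to $-\tfrac1{2\pi}\int_0^\infty e^{-xt^2}\Psi(t)\,dt=-\Phi(x)$, which proves \eqref{modular}. (This $\Gamma$-identity is precisely what makes the form of Delsarte and that of Chakravarty coincide.)

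It remains to prove \eqref{equality}. I would substitute into \eqref{defPsi} the Binet-type relation $\Re\psi(\tfrac12+it)=\log|t|-\int_0^\infty\bigl(\tfrac1{2\sinh(u/2)}-\tfrac1u\bigr)\cos(tu)\,du$, which follows at once from the identity established in the proof of Lemma \ref{lemmatransform} by taking $s=it$ and real parts, and also express $\tfrac{\pi}{2\cosh\pi t}$ through the self-reciprocity of $1/\cosh$, namely $\tfrac1{\cosh\pi t}=\tfrac1\pi\int_0^\infty\tfrac{\cos(tu)}{\cosh(u/2)}\,du$. Interchanging the $t$- and $u$-integrations — legitimate by the $\mathcal L^1(\R)$ bounds furnished by Lemma \ref{lemmatransform} — and using $\int_0^\infty e^{-xt^2}\cos(tu)\,dt=\tfrac12\sqrt{\pi/x}\,e^{-u^2/4x}$ and $\int_0^\infty e^{-xt^2}\log t\,dt=-\tfrac14\sqrt{\pi/x}\,(C_0+\log4x)$, the $\log2\pi-\log|t|$ part yields $\tfrac{C_0+\log16\pi^2x}{8\sqrt{\pi x}}$, while the two resulting $u$-integrals combine via $\tfrac1{2\cosh(u/2)}+\tfrac1{2\sinh(u/2)}=\tfrac{2e^{3u/2}}{e^{2u}-1}$; the substitution $u\mapsto u/2$ then turns the remainder into $-\tfrac1{4\sqrt{\pi x}}\int_0^\infty e^{-u^2/16x}\bigl(\tfrac1u-\tfrac{e^{3u/4}}{e^u-1}\bigr)\,du$, which is \eqref{equality}. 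The one substantive ingredient is the $\Gamma$-function identity of the third paragraph; the remaining care is purely technical (the Fubini justification just above and the horizontal-side decay in the contour argument).
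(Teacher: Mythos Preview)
Your argument is correct, and for the identity \eqref{equality} it is essentially the paper's own computation (split off the $\log$-part, use the Fourier pair from Lemma \ref{lemmatransform} and the self-reciprocity of $\operatorname{sech}$, then change variables). Where you genuinely differ is in the proof of \eqref{modular}: the paper does not derive it at all, but quotes Delsarte for \eqref{modular} with the first expression of $\Phi$ (conditionally on RH) and Chakravarty for \eqref{modular} with the second (unconditionally), so that the theorem reduces to proving \eqref{equality}. You instead reprove \eqref{modular} from scratch by integrating $\tfrac{\xi'}{\xi}G$ over a rectangle, and your key extra ingredient is the identity
\[
\Re\psi\!\Bigl(\tfrac14+\tfrac{it}{2}\Bigr)=\Re\psi\!\Bigl(\tfrac12+it\Bigr)-\log 2-\frac{\pi}{2\cosh\pi t},
\]
obtained from duplication plus reflection, which converts the $\tfrac12\psi(s/2)$ term on the critical line directly into $-\Psi(t)$. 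This makes your derivation self-contained and unconditional from the outset, and it explains transparently \emph{why} Delsarte's and Chakravarty's forms coincide (the $\cosh$-term in $\Psi$ is precisely what the reflection formula produces). The cost is that you must reproduce the standard explicit-formula machinery; the paper's route is shorter because it outsources that to the literature. Two small remarks: your phrase ``the $\cosh$-terms cancelling'' is a slip --- they \emph{match} on the two sides rather than cancel; and for the vanishing of the horizontal sides you should say explicitly that one lets $T\to\infty$ through values avoiding ordinates of zeros, so that $\xi'/\xi=O(\log^2 T)$ there, which the Gaussian decay of $G$ easily dominates.
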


\begin{proof}
Delsarte, \cite{D}*{eq.~(19) and (25), p.~424, 426}, by assuming the  Riemann Hypothesis, proves \eqref{modular} with $\Phi(x)$  given by $\frac{1}{2\pi}\int_0^{+\infty}e^{-xt^2}\Psi(t)\,dt$.  

On the other hand Chakravarty, \cite{C1}*{eq.~(3.2), p.~286}, unconditionally  proves \eqref{modular} but with $\Phi(x)$ given by the second expression in \eqref{equality}

If we prove equation \eqref{equality}, then the Theorem  follows. (Surely  equality \eqref{equality} does not depend on the Riemann Hypothesis!). 

We may write 
\begin{equation}
\Psi(t)=-\log\frac{|t|}{2\pi}+\frac{\pi}{2\cosh\pi t}+\Bigr(\log |t|-\Re\Bigl\{\frac{\Gamma'(\frac12+it)}
{\Gamma(\frac12+it)}\Bigr\}\Bigr).
\end{equation}
It is easy to see that 
\begin{equation}
\frac{1}{2\pi}\int_0^{+\infty}\log\frac{t}{2\pi}e^{-xt^2}\,dt=-\frac{C_0+\log(16\pi^2 x)}{8\sqrt{\pi x}}.
\end{equation}
The Fourier transforms are also well-known:
\begin{equation}
\int_{-\infty}^{+\infty}\frac{\pi}{2\cosh\pi t}e^{-2\pi i y t}\,dt=\frac{\pi}{2\cosh\pi y},\quad 
\int_{-\infty}^{+\infty}e^{-x t^2}e^{-2\pi i y t}\,dt=\sqrt{\frac{\pi}{x}}e^{-\frac
{\pi^2 y^2}{x}}.
\end{equation}
Hence by means of Lemma \ref{lemmatransform} and Parseval we have
\begin{multline}
\frac{1}{2\pi}\int_{-\infty}^{+\infty}\Bigl\{\frac{\pi}{2\cosh\pi t}+\Bigr(\log |t|-\Re\frac{\Gamma'(\frac12+it)}
{\Gamma(\frac12+it)}\Bigr)\Bigr\}e^{-xt^2}\,dt=\\=\frac{1}{2\pi}
\int_{-\infty}^{+\infty}\Bigl\{
\frac{\pi}{2\cosh\pi y}+\frac{\pi}{2\sinh\pi|y|}-\frac{1}{2|y|}
\Bigr\}\sqrt{\frac{\pi}{x}}e^{-\frac
{\pi^2 y^2}{x}}\,dy=
\end{multline} 
\begin{equation}
=\frac{1}{2\sqrt{\pi x}}\int_{-\infty}^{+\infty}\Bigl\{
\frac{2\pi e^{3\pi |y|}}{e^{4\pi |y|}-1}-\frac{1}{2|y|}
\Bigr\}e^{-\frac
{\pi^2 y^2}{x}}\,dy=
\frac{1}{2\sqrt{\pi x}}\int_{-\infty}^{+\infty}\Bigl\{
\frac{2\pi e^{\frac34|u|}}{e^{|u|}-1}-\frac{2\pi}{|u|}
\Bigr\}e^{-\frac
{u^2}{16x}}\,\frac{du}{4\pi}.
\end{equation}
Since the integrated functions are even, this is equivalent to equation \eqref{equality}.
\end{proof}

\section{Main formula.}

\begin{definition}
For $\Re z>0$ we define
\begin{equation}
f(z):=\sum_{n=1}^\infty e^{-\alpha_n^2z}.
\end{equation}
The function $f$ is holomorphic in $\Re z>0$.
\end{definition}

\begin{proof}
We must show that the series converges uniformly in compact sets of $\Re z>0$.  Let $K$ be such  a compact. There exists $\delta>0$ and $M<\infty$ such that $\Re z\ge \delta>0$ and $|z|\le M$ for $z\in K$. 

By setting $\alpha_n=\gamma_n+i\beta_n$,  its real and imaginary parts,  and $z=x+iy\in K$, we have
\begin{displaymath}
|e^{-\alpha_n^2z}|= e^{-(\gamma_n^2-\beta_n^2)x+2\gamma_n\beta_n y }\le e^{-\gamma_n^2 \delta+M\beta_n^2+2M\gamma_n|\beta_n|}.
\end{displaymath}
Since $|\beta_n|\le 1/2$,  for a sufficiently large $n$  we obtain
\begin{displaymath}
|e^{-\alpha_n^2z}|\le C e^{-\gamma_n^2 \delta+M\gamma_n}\le C e^{-\frac{\delta}{2}\gamma_n^2}.
\end{displaymath}
Since $\gamma_n\sim n\log n$,  the numerical series $\sum e^{-\frac{\delta}{2}\gamma_n^2}$ converges. 
\end{proof}

\begin{theorem}
For $\Re s>1$, 
\begin{equation}\label{modexp}
\Gamma(s/2)Z(s)=\int_0^{+\infty}f(x)x^{\frac{s}{2}}\frac{dx}{x}.
\end{equation}
\end{theorem}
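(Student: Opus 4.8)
The plan is to read \eqref{modexp} as the Mellin transform of $f$ computed term by term. First I would isolate the one elementary ingredient: for every complex $w$ with $\Re w>0$ and every $s$ with $\Re s>0$,
\begin{equation*}
\int_0^{+\infty}e^{-wx}x^{s/2}\,\frac{dx}{x}=w^{-s/2}\,\Gamma(s/2),
\end{equation*}
with principal branches throughout. This is just the substitution $u=wx$ in $\Gamma(s/2)=\int_0^{+\infty}e^{-u}u^{s/2-1}\,du$; the deformation of the ray of integration from $[0,+\infty)$ to the ray $\{wx:x>0\}$, which lies in the open right half-plane, is justified by Cauchy's theorem together with the decay of $e^{-u}u^{s/2-1}$ near $0$ and near $\infty$.

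Next I would apply this with $w=\alpha_n^2$. Every zero $\rho=\tfrac12+i\alpha_n$ with $\Im\rho>0$ lies in the critical strip, so $\alpha_n=\gamma_n+i\beta_n$ with $\gamma_n=\Re\alpha_n\ge\gamma_1>14$ and $|\beta_n|=|\Im\alpha_n|<\tfrac12$; hence $\Re(\alpha_n^2)=\gamma_n^2-\beta_n^2>0$ for every $n$, and moreover $(\alpha_n^2)^{-s/2}=\alpha_n^{-s}$, because $\arg\alpha_n\in(-\tfrac{\pi}{2},\tfrac{\pi}{2})$ makes the principal logarithm satisfy $\log(\alpha_n^2)=2\log\alpha_n$. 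Thus $\int_0^{+\infty}e^{-\alpha_n^2x}\,x^{s/2}\,\frac{dx}{x}=\alpha_n^{-s}\,\Gamma(s/2)$ for each $n$, and summing over $n$ and interchanging $\sum_n$ with $\int_0^{+\infty}$ would give precisely \eqref{modexp} in view of \eqref{def}.

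The only delicate point — and the main obstacle — is to justify that interchange, for which I would invoke Tonelli after bounding the terms in absolute value. Writing $\sigma=\Re s$ and using $|e^{-\alpha_n^2x}x^{s/2-1}|=e^{-\Re(\alpha_n^2)x}\,x^{\sigma/2-1}$, the identity above gives
\begin{equation*}
\sum_{n=1}^\infty\int_0^{+\infty}\bigl|e^{-\alpha_n^2x}x^{s/2-1}\bigr|\,dx
=\Gamma(\sigma/2)\sum_{n=1}^\infty\bigl(\Re(\alpha_n^2)\bigr)^{-\sigma/2}
\le\Gamma(\sigma/2)\sum_{n=1}^\infty\Bigl(\gamma_n^2-\tfrac14\Bigr)^{-\sigma/2}.
\end{equation*}
By the Riemann--von Mangoldt formula $\gamma_n\sim 2\pi n/\log n$, so the majorant series is comparable to $\sum_n(\log n)^{\sigma}(2\pi n)^{-\sigma}$, which converges exactly when $\sigma>1$, i.e.\ on the half-plane $\Re s>1$ where $Z(s)$ is defined. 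Hence Tonelli (and then Fubini) applies, the term-by-term computation of the previous paragraph is legitimate, and \eqref{modexp} follows. As a byproduct, the convergence of this majorant also re-establishes that $Z(s)=\sum_n\alpha_n^{-s}$ converges absolutely for $\Re s>1$.
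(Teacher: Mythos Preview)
Your proof is correct and follows essentially the same route as the paper's own argument: evaluate each Mellin integral $\int_0^\infty e^{-\alpha_n^2 x}x^{s/2-1}\,dx=\Gamma(s/2)\alpha_n^{-s}$ and justify the interchange of sum and integral via the convergence of $\sum_n(\gamma_n^2-\tfrac14)^{-\sigma/2}$ for $\sigma>1$. If anything, you are more careful than the paper in spelling out the contour deformation for complex $w=\alpha_n^2$, the branch identity $(\alpha_n^2)^{-s/2}=\alpha_n^{-s}$, and the Riemann--von Mangoldt asymptotics behind the convergence of the majorant.
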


\begin{proof}
For each $n$,  $\Re\alpha_n=\gamma_n\ge\gamma_1=\delta> \tfrac12 $, and hence for $x>0$ real
$|e^{-\alpha^2 x}|=e^{-(\Re\alpha)^2 x+(\Im\alpha)^2x}\le e^{-(\delta^2-\frac14)x}=e^{-bx}$ with $b>0$. It follows that, for each $n$,
\begin{equation}
\int_0^\infty e^{-\alpha_n^2 x}x^{\frac{s}{2}}\frac{dx}{x}=
\frac{\Gamma(s/2)}{\alpha_n^s},
\end{equation}
and therefore
\begin{equation}
\int_0^\infty f(x)x^{\frac{s}{2}}\frac{dx}{x}=\sum_{n=1}^\infty
\int_0^\infty e^{-\alpha_n^2 x}x^{\frac{s}{2}}\frac{dx}{x}=\Gamma(s/2)Z(s).
\end{equation}
The sum and the integral can be interchanged since 
$\sum \{\gamma_n^2-\frac14\}^{-\sigma/2}<+\infty$ for $\sigma>1$. 
\end{proof}

We shall take a positive real number $a$, and separate $Z(s)$ into two parts
\begin{gather}
Z(s)=A(s)+B(s),\\ A(s)=\frac{1}{\Gamma(\frac{s}{2})}\int_a^{+\infty}f(x)
x^{\frac{s}{2}}\frac{dx}{x},\qquad 
B(s)=\frac{1}{\Gamma(\frac{s}{2})}\int_0^af(x)
x^{\frac{s}{2}}\frac{dx}{x}.
\end{gather}

\begin{lemma}\label{incomplete}
The following bounds for the incomplete gamma function holds:
\begin{equation}\label{Gabcke}
\Gamma(\sigma, a)\le \sigma e^{-a}a^{\sigma-1},\qquad a>\sigma\ge1.
\end{equation}
\begin{equation}\label{second}
\Gamma(\sigma, a)\le e^{-a}a^{\sigma-1},\qquad a>0,\qquad \sigma\le1.
\end{equation}
\end{lemma}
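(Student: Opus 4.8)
The plan is to treat the two inequalities separately, in each case reducing the incomplete gamma integral $\Gamma(\sigma,a)=\int_a^{+\infty}t^{\sigma-1}e^{-t}\,dt$ to an elementary estimate on the integrand.

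For \eqref{second}, where $\sigma\le1$, I would simply observe that the exponent $\sigma-1$ is non-positive, so that $t^{\sigma-1}\le a^{\sigma-1}$ for every $t\ge a>0$. Pulling this constant out of the integral gives $\Gamma(\sigma,a)\le a^{\sigma-1}\int_a^{+\infty}e^{-t}\,dt=e^{-a}a^{\sigma-1}$, which is precisely \eqref{second}. No further work is needed in this range, and the argument makes no use of $\sigma\ge0$, so it covers all real $\sigma\le1$.

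For \eqref{Gabcke}, where $a>\sigma\ge1$, I would first substitute $t=a+u$, which turns the integral into $\Gamma(\sigma,a)=e^{-a}a^{\sigma-1}\int_0^{+\infty}(1+u/a)^{\sigma-1}e^{-u}\,du$. Since $\sigma-1\ge0$, raising the elementary inequality $1+x\le e^{x}$ to the power $\sigma-1$ gives $(1+u/a)^{\sigma-1}\le e^{(\sigma-1)u/a}$. Because $a>\sigma>\sigma-1$, the majorant $e^{-u(1-(\sigma-1)/a)}$ is integrable on $(0,+\infty)$ with integral $a/(a-\sigma+1)$, whence $\Gamma(\sigma,a)\le e^{-a}a^{\sigma-1}\cdot a/(a-\sigma+1)$. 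It then remains to verify that $a/(a-\sigma+1)\le\sigma$; clearing the (positive) denominator, this is equivalent to $0\le(\sigma-1)(a-\sigma)$, which holds since $\sigma\ge1$ and $a>\sigma$. Combining the two estimates yields \eqref{Gabcke}.

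There is no serious obstacle here; the proof is a routine estimate. The only point requiring care is the choice of the exponential majorant for $(1+u/a)^{\sigma-1}$, which is what makes the constant come out exactly $\sigma$ rather than something weaker, together with keeping track of the convergence condition $a>\sigma-1$, guaranteed by the hypothesis $a>\sigma$. As an alternative for the subrange $1\le\sigma\le2$ one could instead integrate by parts once, $\Gamma(\sigma,a)=e^{-a}a^{\sigma-1}+(\sigma-1)\Gamma(\sigma-1,a)$, bound $\Gamma(\sigma-1,a)$ by \eqref{second}, and finish with $(\sigma-1)(a-1)\ge0$; but the substitution argument has the advantage of handling all $\sigma\ge1$ uniformly.
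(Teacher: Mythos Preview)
Your proof is correct. For \eqref{second} you reproduce exactly the paper's argument: since $\sigma-1\le0$, the factor $(t/a)^{\sigma-1}$ is at most $1$ on $[a,\infty)$, and the bound follows by integrating $e^{-t}$.

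For \eqref{Gabcke} the paper does not give a proof at all but simply cites Gabcke's thesis. Your argument---shifting $t=a+u$, majorising $(1+u/a)^{\sigma-1}$ by $e^{(\sigma-1)u/a}$, integrating to $a/(a-\sigma+1)$, and then checking $(\sigma-1)(a-\sigma)\ge0$---is a clean, self-contained derivation that yields the sharp constant $\sigma$. So here you supply more than the paper does; the substitution approach is standard for incomplete-gamma tail bounds and is likely close in spirit to what Gabcke did, but in any case it is a genuine improvement over a bare citation.
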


\begin{proof}
The first inequality \eqref{Gabcke} may be found in \cite{Ga}*{Satz 3, p.~145}.
The second is well-known (see Olver \cite{O}*{(1.05) p.~67}) and easy to prove. 
Observe that  for $\sigma\le1$, 
\begin{equation}
a^{1-\sigma}\int_a^\infty t^\sigma e^{-t}\frac{dt}{t}=
\int_a^\infty \Bigl(\frac{t}{a}\Bigr)^{\sigma-1} e^{-t}\,dt<
\int_a^\infty  e^{-t}\,dt=e^{-a}.\qedhere
\end{equation}
\end{proof}

\begin{theorem}
$A(s)$ extends to an entire function that is given by the series
\begin{equation}\label{A}
A(s)=\sum_{n=1}^\infty \frac{\Gamma(\frac{s}{2},a\alpha_n^2)}{\Gamma(\frac{s}{2})}
\frac{1}{\alpha_n^s}.
\end{equation}
\end{theorem}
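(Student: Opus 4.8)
The plan is to start from the integral representation $\Gamma(s/2)A(s)=\int_a^{+\infty}f(x)\,x^{s/2-1}\,dx$ valid for $\Re s>1$, move the sum over the zeros outside the integral, recognise the resulting terms, and then show that the resulting series of entire functions converges locally uniformly on all of $\C$. Write $\alpha_n=\gamma_n+i\beta_n$ and $c_n:=\Re(\alpha_n^2)=\gamma_n^2-\beta_n^2$; since (as recorded in the proofs above) $\gamma_n=\Re\alpha_n\ge\gamma_1=\delta>\tfrac12$, $|\beta_n|\le\tfrac12$, and $\gamma_n\sim n\log n$, we have $c_n\ge\delta^2-\tfrac14>0$ for every $n$ and $c_n\sim\gamma_n^2\sim(n\log n)^2$. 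For $\Re s=\sigma>1$ the interchange $\int_a^{+\infty}\sum_n=\sum_n\int_a^{+\infty}$ is justified by Tonelli, because $\sum_n\int_a^{+\infty}|e^{-\alpha_n^2x}|\,x^{\sigma/2-1}\,dx\le\sum_n\int_0^{+\infty}e^{-c_nx}x^{\sigma/2-1}\,dx=\Gamma(\sigma/2)\sum_nc_n^{-\sigma/2}<\infty$. Hence $\Gamma(s/2)A(s)=\sum_{n\ge1}g_n(s)$ with $g_n(s):=\int_a^{+\infty}e^{-\alpha_n^2x}x^{s/2-1}\,dx$; the substitution $t=\alpha_n^2x$ followed by a rotation of the contour back to the positive real axis (legitimate since $\Re\alpha_n^2=c_n>0$) identifies $g_n(s)=\alpha_n^{-s}\,\Gamma(\tfrac{s}{2},a\alpha_n^2)$, and differentiating under the integral sign shows each $g_n$ is entire in $s$.

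The \emph{crux} is to prove that $\sum_n g_n$ converges locally uniformly on $\C$. I would estimate crudely $|g_n(s)|\le\int_a^{+\infty}e^{-c_nx}x^{\sigma/2-1}\,dx=c_n^{-\sigma/2}\,\Gamma(\sigma/2,ac_n)$, $\sigma=\Re s$. Fix a compact $K\subset\C$ with $\sigma_0\le\Re s\le\sigma_1$ on $K$. Since $c_n\to+\infty$, for all large $n$ one has $ac_n\ge\max(1,\sigma_1/2)$, and then $\Gamma(\sigma/2,ac_n)\le\Gamma(\sigma_1/2,ac_n)$ (because $t^{\sigma/2-1}\le t^{\sigma_1/2-1}$ for $t\ge ac_n\ge1$), which Lemma~\ref{incomplete} with the fixed exponent $\sigma_1/2$ bounds by $C_{\sigma_1}e^{-ac_n}(ac_n)^{\sigma_1/2-1}$. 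Also $c_n^{-\sigma/2}\le c_n^{-\sigma_0/2}$ once $c_n\ge1$. Altogether $|g_n(s)|\le C_K\,c_n^{\,p}\,e^{-ac_n}$ uniformly for $s\in K$, with $p=p(K)$ a fixed real exponent; this majorant is summable because $c_n$ grows like $(n\log n)^2$.

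By the Weierstrass theorem $G(s):=\sum_{n\ge1}g_n(s)$ is then entire, and since $1/\Gamma(s/2)$ is entire so is $A(s)=G(s)/\Gamma(s/2)=\sum_{n\ge1}\frac{\Gamma(s/2,a\alpha_n^2)}{\Gamma(s/2)}\frac{1}{\alpha_n^s}$. By the first paragraph this entire function agrees with the original $A(s)$ on $\Re s>1$, so by uniqueness of analytic continuation it is the asserted extension, which is \eqref{A}.

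I expect the only genuinely delicate point to be that Lemma~\ref{incomplete} is stated for a real argument whereas $a\alpha_n^2$ is in general complex; the remedy is to pass first to the elementary bound $|e^{-\alpha_n^2x}|=e^{-c_nx}$ on the real line and only afterwards invoke the Lemma, taking some care to make the estimate uniform in $s$ over compact sets (which is what forces the comparison $\Gamma(\sigma/2,ac_n)\le\Gamma(\sigma_1/2,ac_n)$). The Tonelli interchange, the contour rotation, and the Weierstrass step are all routine.
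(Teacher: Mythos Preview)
Your proof is correct and follows essentially the same route as the paper: interchange the sum and the integral $\int_a^\infty$ after bounding $|e^{-\alpha_n^2x}|\le e^{-c_nx}$ with $c_n=\gamma_n^2-\beta_n^2$ (the paper uses the cruder $\gamma_n^2-\tfrac14$), then appeal to Lemma~\ref{incomplete} to control $\Gamma(\sigma/2,ac_n)$ and conclude uniform convergence on compacta. If anything, your write-up is more careful than the paper's: you spell out the contour rotation needed to identify $g_n(s)$ with $\alpha_n^{-s}\Gamma(s/2,a\alpha_n^2)$ at complex $a\alpha_n^2$, and you make the uniformity in $s$ explicit via the comparison $\Gamma(\sigma/2,ac_n)\le\Gamma(\sigma_1/2,ac_n)$, whereas the paper simply asserts that ``the bounds are uniform on compact sets.''
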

\begin{proof}
Consider the integral
\begin{equation}
\int_a^\infty \sum_{n=1}^\infty e^{-\alpha_n^2x}x^{\frac{s}{2}-1}\,dx
\end{equation}
we have
\begin{equation}\label{bound}
\sum_{n=1}^\infty\int_a^\infty |e^{-\alpha_n^2x}x^{\frac{s}{2}-1}|\,dx\le 
\sum_{n=1}^\infty\int_a^\infty e^{-(\gamma_n^2-\frac14)x}x^{\frac{\sigma}{2}-1}\,dx
=\sum_{n=1}^\infty \frac{\Gamma(\frac{\sigma}{2},a(\gamma_n^2-\frac14) )}{(\gamma_n^2-\frac14)^{\frac{\sigma}{2}}}.
\end{equation}

The series in \eqref{bound} is convergent by Lemma \ref{incomplete}. The 
integral and the sum may then be exchanged in order to obtain \eqref{A}. It is clear that the bounds are uniform on compact sets.  Therefore $A(s)$ extends to an integral function.
\end{proof}

We shall need an estimation of the error commited if  the sum in \eqref{A} is substituted with
a partial sum.
We have
\begin{equation}
R_N:=\sum_{N+1}^\infty \Bigl|\frac{\Gamma(\frac{s}{2},a\alpha_n^2)}{\Gamma(\frac{s}{2})}\frac{1}{\alpha_n^s}\Bigr|\le 
\frac{1}{|\Gamma(\frac{s}{2})|}\sum_{N+1}^\infty\frac{\Gamma(\frac{\sigma}{2},a(\gamma_n^2-\frac14))}
{(\gamma_n^2-\frac14)^{\frac{\sigma}{2}}}.
\end{equation}
We now assume  that $N$ is sufficiently large to guarantee $a(\gamma_N^2-\frac14)>\frac{\sigma}{2}$ and define
$\mu:=\max(\frac{\sigma}{2},1)$. Lemma \ref{incomplete} may therefore be applied in order to obtain 
\begin{equation}
R_N\le \frac{\mu}{|\Gamma(\frac{s}{2})|}\sum_{N+1}^\infty e^{-a(\gamma_n^2-\frac14)}
\frac{(a(\gamma_n^2-\frac14))^{\frac{\sigma}{2}-1}}{(\gamma_n^2-\frac14)^{\frac{\sigma}{2}}}=
\frac{\mu a^{\frac{\sigma}{2}-1}}{|\Gamma(\frac{s}{2})|}\sum_{N+1}^\infty\frac{e^{-a(\gamma_n^2-\frac14)}}
{\gamma_n^2-\frac14}.
\end{equation}
Hence 
\begin{equation}
R_N\le \frac{2\mu a^{\frac{\sigma}{2}-1}e^\frac{a}{4}}{|\Gamma(\frac{s}{2})|}\sum_{N+1}^\infty\frac{e^{-a\gamma_n^2}}
{\gamma_n^2}= \frac{2\mu a^{\frac{\sigma}{2}-1}e^\frac{a}{4}}{|\Gamma(\frac{s}{2})|}
\int_{\gamma_N}^\infty\frac{e^{-ax^2}}
{x^2}\,dN(x).
\end{equation}
The function $N(x)=\frac{1}{2\pi}\log\frac{x}{2\pi}-\frac{x}{2\pi}+R(x)$ with $R(x)=\Orden(\log x)$. Therefore $N(x)$ increases as if it had a density of $\frac{1}{2\pi}\log\frac{x}{2\pi}$, and hence, not rigorously, 
a bound may be obtained, 
\begin{equation}\label{error-main}
R_N\lessapprox \frac{2\mu a^{\frac{\sigma}{2}-1}e^\frac{a}{4}}{2\pi|\Gamma(\frac{s}{2})|}\log\frac{\gamma_N}{2\pi}
\int_{\gamma_N}^\infty\frac{e^{-ax^2}}
{x^2}\,dx=\frac{\mu a^{\frac{\sigma-1}{2}}e^\frac{a}{4}}{2\pi}\frac{\Gamma(-\frac12,a\gamma_N^2)}{|\Gamma(\frac{s}{2})|}\log\frac{\gamma_N}{2\pi}.
\end{equation}
and we have 
\[\frac{\mu a^{\frac{\sigma-1}{2}}e^\frac{a}{4}}{2\pi}\frac{\Gamma(-\frac12,a\gamma_N^2)  }{|\Gamma(\frac{s}{2})|}\log\frac{\gamma_N}{2\pi}\le \frac{\mu a^{\frac{\sigma-1}{2}}e^\frac{a}{4}}{2\pi}\frac{ (a\gamma_N^2)^{-3/2}e^{-a\gamma_N^2}}{|\Gamma(\frac{s}{2})|}\log\frac{\gamma_N}{2\pi}=
\frac{\mu a^{\frac{\sigma}{2}-2}e^\frac{a}{4}}{2\pi\gamma_N^3}\frac{ e^{-a\gamma_N^2}}{|\Gamma(\frac{s}{2})|}\log\frac{\gamma_N}{2\pi}\]
On the other hand 
\[\int_{\gamma_N}^\infty \frac{e^{-ax^2}}{x^2}\,dR(x)=-\frac{e^{-a\gamma_N^2}}{\gamma_N^2}R(\gamma_N)+\int_{\gamma_N}^\infty R(x)e^{-ax^2}\Bigl(\frac{2}{x^3}+\frac{2a}{x}\Bigr)\,dx\]
\[\Bigl|\int_{\gamma_N}^\infty \frac{e^{-ax^2}}{x^2}\,dR(x)\Bigr|\ll
\frac{e^{-a\gamma_N^2}}{\gamma_N^2}\log\gamma_N+\log\gamma_N\int_{\gamma_N}^\infty e^{-ax^2}\Bigl(\frac{2}{x^3}+\frac{2a}{x}\Bigr)\,dx\]
\[\ll \frac{e^{-a\gamma_N^2}}{\gamma_N^2}\log\gamma_N-\log\gamma_N\int_{\gamma_N}^\infty d\frac{e^{-ax^2}}{x^2}=2\frac{e^{-a\gamma_N^2}}{\gamma_N^2}\log\gamma_N.\]
This is also small for $N\to\infty$, and possibly this bound is too large, because of cancellation in the values of $R(x)$. 

In the integral defining $B(s)$, we substitute the modular equation  to obtain three terms 
\begin{gather}
B(s)=-P(s)+E(s)-S(s),\\
P(s)=\frac{1}{\Gamma(\frac{s}{2})}\int_0^a\frac{1}{2\sqrt{\pi x}}
\sum_{n=2}^\infty \frac{\Lambda(n)}{\sqrt{n}}e^{-\frac{1}{4x}\log^2n}x^{\frac{s}{2}}
\frac{dx}{x},\label{defP} \\E(s)=\frac{1}{\Gamma(\frac{s}{2})}\int_0^a e^{\frac{x}{4}}
x^{\frac{s}{2}}\frac{dx}{x},\qquad
S(s)=\frac{1}{\Gamma(\frac{s}{2})}\int_0^a \Phi(x)
x^{\frac{s}{2}}\frac{dx}{x}.
\end{gather}
We shall say that $P(s)$ is the prime term, $E(s)$ the exponential term and $S(s)$ the singular term.

\begin{theorem}\label{prime}
The prime term $P(s)$ extends to an entire function given by the series
\begin{equation}\label{eqP}
P(s)=\frac{1}{2\sqrt{\pi}}\sum_{n=2}^\infty \frac{\Lambda(n)}{\sqrt{n}}
\frac{\Gamma(\frac{1-s}{2},\frac{\log^2n}{4a})}{\Gamma(\frac{s}{2})}
\Bigl(\frac{2}{\log n}\Bigr)^{1-s}.
\end{equation}
\end{theorem}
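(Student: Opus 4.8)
The plan is to work from the integral representation \eqref{defP}, interchange the sum over $n$ with the integral over $(0,a)$, and evaluate each of the resulting integrals in closed form. For the interchange, fix $\sigma=\Re s>1$; replacing the integrand by its modulus replaces $s$ by $\sigma$, so it suffices to know that $\sum_{n\ge2}\frac{\Lambda(n)}{\sqrt n}\int_0^a e^{-\frac{\log^2 n}{4x}}x^{\frac{\sigma}{2}-\frac32}\,dx<\infty$, and this drops out of the closed form computed below together with the rapid decay of the incomplete gamma function; Tonelli and Fubini then legitimate the term-by-term integration.

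For each fixed $n\ge2$ I would compute
\[
\int_0^a e^{-\frac{\log^2 n}{4x}}x^{\frac{s}{2}-\frac32}\,dx
\]
by the substitution $u=\frac{\log^2 n}{4x}$, i.e. $x=\frac{\log^2 n}{4u}$, which sends $x:0\to a$ to $u:\infty\to\frac{\log^2 n}{4a}$. Collecting the powers of $u$ (the exponent becomes $\tfrac{1-s}{2}-1$) gives
\[
\int_0^a e^{-\frac{\log^2 n}{4x}}x^{\frac{s}{2}-\frac32}\,dx
=\Bigl(\frac{\log^2 n}{4}\Bigr)^{\frac{s-1}{2}}\int_{\frac{\log^2 n}{4a}}^{\infty}e^{-u}u^{\frac{1-s}{2}-1}\,du
=\Bigl(\frac{2}{\log n}\Bigr)^{1-s}\Gamma\Bigl(\frac{1-s}{2},\frac{\log^2 n}{4a}\Bigr).
\]
Inserting this into \eqref{defP} and pulling out $\frac{1}{2\sqrt\pi\,\Gamma(s/2)}$ yields \eqref{eqP} for $\Re s>1$.

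It remains to see that the right-hand side of \eqref{eqP} is an entire function, so that it provides the asserted continuation of $P(s)$. I would show the series converges locally uniformly on $\C$. With $y=\frac{\log^2 n}{4a}\to\infty$, Lemma \ref{incomplete} (or the elementary bound $\Gamma(\beta,y)\ll_\beta y^{\Re\beta-1}e^{-y}$) gives $\bigl|\Gamma(\tfrac{1-s}{2},\tfrac{\log^2 n}{4a})\bigr|\ll (\log n)^{C}e^{-\frac{\log^2 n}{4a}}$ uniformly for $s$ in a fixed compact set, while $\frac{\Lambda(n)}{\sqrt n}\bigl|\frac{2}{\log n}\bigr|^{1-s}\ll (\log n)^{C'}n^{-1/2}$. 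The decisive factor is $e^{-\frac{\log^2 n}{4a}}=n^{-\frac{\log n}{4a}}$, which decays faster than any power of $n$; hence the series is dominated by a convergent numerical series and, by the theorems of Morera and Weierstrass, defines an entire function. Since $1/\Gamma(s/2)$ is entire, $P(s)$ extends to an entire function agreeing with \eqref{eqP}.

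The main obstacle: there is in fact no deep point — everything rests on the substitution that produces the incomplete gamma function and on the super-polynomial decay coming from $e^{-\log^2 n/(4a)}$. The only places needing a little care are the justification of the term-by-term integration and the uniform bound used for local uniform convergence, and both reduce to the incomplete-gamma estimates already recorded in Lemma \ref{incomplete}.
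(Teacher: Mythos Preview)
Your proof is correct and follows essentially the same route as the paper: interchange the sum and integral (you invoke Tonelli/Fubini, the paper invokes positivity for real $s$; these amount to the same thing), evaluate each integral via the substitution $u=\frac{\log^2 n}{4x}$ to obtain the incomplete gamma, and then use the bounds of Lemma~\ref{incomplete} to show locally uniform convergence. The only real difference is that the paper goes further and records an explicit numerical tail estimate $|P_N|$ (needed later for the computational implementation), whereas you stop at qualitative convergence, which is all the theorem itself requires.
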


\begin{proof}
For a real $s$,  the series in the integrand in the definition \eqref{defP} of $P(s)$  has positive terms. In this case  the sum and integral may be interchanged. Furthermore, it is easy to see that if this series is convergent for $s=\sigma$,  the sum and integral  may be interchanged for a $\Re s>\sigma$. 

What is obtained after interchange is the series in \eqref{eqP}.  Hence it only remains to be proved that this series is absolutely convergent,  to justify the interchange of sum and integral.  It is preferable to bound the rest of the sum. 
\begin{multline}
|P_N|\le \frac{1}{2\sqrt{\pi}}\Bigl|\sum_{n=N+1}^\infty \frac{\Lambda(n)}{\sqrt{n}}
\frac{\Gamma(\frac{1-s}{2},\frac{\log^2n}{4a})}{\Gamma(\frac{s}{2})}
\Bigl(\frac{2}{\log n}\Bigr)^{1-s}\Bigr|\le \\ \le
\frac{1}{2\sqrt{\pi}|\Gamma(s/2)|}\sum_{n=N+1}^\infty \frac{\Lambda(n)}{\sqrt{n}}
\Gamma\Bigl(\frac{1-\sigma}{2},\frac{\log^2n}{4a}\Bigr)
\Bigl(\frac{2}{\log n}\Bigr)^{1-\sigma}.
\end{multline}
By first assuming that $\frac{1-\sigma}{2}\ge1$, and by taking a sufficiently large  $N$  we will have
$\frac{\log^2N}{4a}>\frac{1-\sigma}{2}$,  and by \eqref{Gabcke}
\begin{equation}\label{errorP}
|P_N|\le\frac{1-\sigma}{2} \frac{2a^{\frac{1+\sigma}{2}}}{\sqrt{\pi}
|\Gamma(s/2)|}\sum_{n=N+1}^\infty \frac{\Lambda(n)}{\sqrt{n}\log^2n}
e^{-\frac{\log^2n}{4a}}.
\end{equation}
The only difference when  $\frac{1-\sigma}{2}<1$ (by \eqref{second}) is that the factor $\frac{1-\sigma}{2}$ must be eliminated.

Now
\begin{equation}
R_N:=\sum_{n=N+1}^\infty \frac{\Lambda(n)}{\sqrt{n}\log^2n}
e^{-\frac{\log^2n}{4a}}\le \sum_{n=N+1}^\infty \frac{1}{\sqrt{n}\log n}
e^{-\frac{\log^2n}{4a}}.
\end{equation}
The terms of this sum are decreasing and hence
\begin{equation}
R_N\le\int_N^\infty \frac{1}{\sqrt{u}\log u}
e^{-\frac{\log^2u}{4a}}\,du=\int_{\frac{\log N}{2\sqrt{a}}}^\infty
e^{-t^2+\sqrt{a} t}\frac{dt}{t}.
\end{equation}
Multiplying the integrand by $\frac{4at^2}{\log^2N}>1$  yields
\begin{multline}
R_N\le \frac{4a}{\log^2N}
\int_{\frac{\log N}{2\sqrt{a}}}^\infty
t\,e^{-t^2+\sqrt{a} t}\,dt\le \frac{4ae^{\frac{a}{4}}}{\log^2N}
\int_{\frac{\log N}{2\sqrt{a}}}^\infty
te^{-(t-\frac{\sqrt{a}}{2})^2} \,dt\le\\
\le \frac{4ae^{\frac{a}{4}}}{\log^2N}
\int_{\frac{\log N}{2\sqrt{a}}-\frac{\sqrt{a}}{2}}^\infty
(u+ \tfrac12 \sqrt{a})e^{-u^2} \,du.
\end{multline} 
Taking $\log N>2a$, gives $u>\sqrt{a}/2$ and therefore
\begin{equation}
R_N\le \frac{4ae^{\frac{a}{4}}}{\log^2N}
\int_{\frac{\log N}{2\sqrt{a}}-\frac{\sqrt{a}}{2}}^\infty
2u\,e^{-u^2} \,du= \frac{4ae^{\frac{a}{4}}}{\log^2N}e^{-(\frac{\log N}{2\sqrt{a}}
-\frac{\sqrt{a}}{2})^2}.
\end{equation}
For practical purposes  $0<a<1$ is always assumed. Hence $\log N>2a$ will be true if  $N\ge8$. In this case,  $\frac12\frac{\log N}{2\sqrt{a}}>\frac{\sqrt{a}}{2}$. It follows that 
\begin{equation}
R_N\le \frac{4ae^{\frac{a}{4}}}{\log^2N}e^{-\frac{\log^2 N}{16a}}\le 
\frac{6}{\log^2N}e^{-\frac{\log^2 N}{16a}}.
\end{equation}

Hence for  $N\ge8$,
\begin{equation}\label{error46}
P_N\le 
\delta\frac{2a^{\frac{1+\sigma}{2}}}{\sqrt{\pi}
|\Gamma(s/2)|}\frac{6}{\log^2N}e^{-\frac{\log^2 N}{16a}},
\end{equation}
where $\delta=\max(1,\frac{1-\sigma}{2})$. 

Since this bound is uniform on compact sets, the function $P(s)$ is entire. 
\end{proof}

\begin{theorem}
The exponential term extends to  an entire function. It can be computed by the formula
\begin{equation}\label{computE}
E(s)=\frac{1}{\Gamma(\frac{s}{2})} \sum_{n=0}^\infty \frac{1}{4^n n!}\frac{a^{n+\frac{s}{2}}}{n+\frac{s}{2}}
\end{equation}
and for $s=-2n$  we have
\begin{equation}\label{valuesE}
E(-2n)=\frac{(-1)^n}{4^n},\qquad n=0, 1, 2,\dots.
\end{equation}
\end{theorem}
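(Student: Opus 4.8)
The plan is to expand $e^{x/4}$ in its Taylor series, integrate term by term on $[0,a]$, and then analyse the resulting series of meromorphic functions.

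\emph{Term-by-term integration.} For $\Re s>0$ the integral $\int_0^a e^{x/4}x^{s/2-1}\,dx$ is absolutely convergent, since $e^{x/4}$ is bounded on $[0,a]$ and $x^{s/2-1}\in\mathcal L^1(0,a)$. As $e^{x/4}=\sum_{n\ge0}x^n/(4^n n!)$ converges uniformly on $[0,a]$, I may interchange sum and integral to obtain
\[
\int_0^a e^{x/4}x^{\frac s2}\frac{dx}{x}=\sum_{n=0}^\infty\frac1{4^n n!}\int_0^a x^{n+\frac s2-1}\,dx=\sum_{n=0}^\infty\frac1{4^n n!}\frac{a^{n+\frac s2}}{n+\frac s2},
\]
and dividing by $\Gamma(s/2)$ gives \eqref{computE} for $\Re s>0$.

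\emph{Analytic continuation.} Next I would show the right-hand side of \eqref{computE} extends to an entire function. The coefficient $a^n/(4^n n!)$ decays super-exponentially, while for $s$ fixed off the set $\{0,-2,-4,\dots\}$ one has $|n+s/2|\ge n/2$ for $n$ large; hence $\sum_{n}\frac1{4^n n!}\frac{a^{n+s/2}}{n+s/2}$ converges absolutely and locally uniformly on $\C\setminus\{-2n:n\ge0\}$ and is holomorphic there. Near $s=-2m$ only the $n=m$ term is singular, contributing a simple pole with $\operatorname{Res}_{s=-2m}$ equal to $\tfrac{2}{4^m m!}$ (because $\operatorname{Res}_{s=-2m}\frac1{m+s/2}=2$). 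Since $1/\Gamma(s/2)$ is entire, the product $E(s)=\frac1{\Gamma(s/2)}\sum_n(\cdots)$ is holomorphic on $\C\setminus\{-2n\}$ and agrees with the original $E(s)$ for $\Re s>0$, so it is the analytic continuation.

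\emph{Cancellation of the poles and the values $E(-2n)$.} At $s=-2m$ the factor $1/\Gamma(s/2)$ has a simple zero: writing $s=-2m+\varepsilon$ and using $\operatorname{Res}_{z=-m}\Gamma(z)=(-1)^m/m!$ gives $1/\Gamma(s/2)=\tfrac{(-1)^m m!}{2}\,\varepsilon\,(1+\Orden(\varepsilon))$. Multiplying this simple zero against the simple pole $\tfrac{2}{4^m m!}\varepsilon^{-1}+\Orden(1)$ of the series shows that $E(s)$ extends holomorphically across $s=-2m$ with value $\tfrac{(-1)^m m!}{2}\cdot\tfrac{2}{4^m m!}=(-1)^m/4^m$, which is \eqref{valuesE}; in particular $E(s)$ is entire. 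The only point needing care is this residue bookkeeping at the points $s=-2m$ — verifying that the simple pole of the $m$-th term is exactly killed by the simple zero of $1/\Gamma(s/2)$, and observing that every other term of the series is already holomorphic there so no delicate cancellation among the terms is involved. The rest is routine.
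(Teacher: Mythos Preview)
Your proof is correct and follows exactly the same route as the paper: expand $e^{x/4}$ in its Taylor series, integrate term by term for $\Re s>0$, and then observe that the simple poles of the resulting series at $s=-2m$ are cancelled by the simple zeros of $1/\Gamma(s/2)$. The only difference is that you spell out the residue bookkeeping for \eqref{valuesE} explicitly, whereas the paper simply says ``the value of $E(-2n)$ is now straightforward to obtain.''
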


\begin{proof}
For $\Re s>0$,
\begin{equation}
E(s)=\frac{1}{\Gamma(\frac{s}{2})}\int_0^a e^{\frac{x}{4}}x^{\frac{s}{2}}\frac{dx}{x}
=\frac{1}{\Gamma(\frac{s}{2})}\sum_{n=0}^\infty \frac{1}{4^n n!}\int_0^a x^{n+\frac{s}{2}}\frac{dx}{x}=
\frac{1}{\Gamma(\frac{s}{2})} \sum_{n=0}^\infty \frac{1}{4^n n!}\frac{a^{n+\frac{s}{2}}}{n+\frac{s}{2}}.
\end{equation}
From the above expression it is clear that $E(s)$ extends to  an entire function, since the poles $s=-2n$ of the sum cancel out the poles of $\Gamma(\frac{s}{2})$.  The value of $E(-2n)$ is now straightforward to obtain.
\end{proof}

\begin{theorem}
For $x>0$ and $x\to0$, 
\begin{equation}\label{PhiAsympt}
\Phi(x)=\frac{C_0+\log16\pi^2x}{8\sqrt{\pi x}}+\frac{1}{8\sqrt{\pi}}
\sum_{n=1}^N\frac{2^{2n}B_n(3/4)}{n!}\Gamma(n/2)x^{\frac{n-1}{2}}+\Orden(
x^{\frac{N}{2}})
\end{equation}
for all integers $N\ge1$.
\end{theorem}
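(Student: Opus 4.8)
The plan is to start from the \emph{second} expression for $\Phi(x)$ provided by \eqref{equality} in Theorem~\ref{modularT}, namely
\[
\Phi(x)=\frac{C_0+\log16\pi^2x}{8\sqrt{\pi x}}-I(x),\qquad
I(x):=\frac{1}{4\sqrt{\pi x}}\int_0^{+\infty}e^{-\frac{u^2}{16x}}g(u)\,du,\quad g(u):=\frac1u-\frac{e^{\frac34u}}{e^u-1},
\]
and to extract the $x\to0^+$ behaviour of the integral by a Watson-type argument. Thus it suffices to show
\[
I(x)=-\frac{1}{8\sqrt{\pi}}\sum_{n=1}^N\frac{2^{2n}B_n(3/4)}{n!}\Gamma(n/2)\,x^{\frac{n-1}{2}}+\Orden\bigl(x^{\frac N2}\bigr),\qquad x\to0^+;
\]
subtracting this from the first term then yields \eqref{PhiAsympt}.

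First I would record the Taylor expansion of $g$ at the origin. From the generating function $\frac{te^{wt}}{e^t-1}=\sum_{n\ge0}B_n(w)t^n/n!$, valid for $|t|<2\pi$, together with $B_0(w)=1$, one gets $\frac{e^{\frac34u}}{e^u-1}=\frac1u\sum_{n\ge0}B_n(3/4)u^n/n!$, hence
\[
g(u)=-\sum_{n=1}^\infty\frac{B_n(3/4)}{n!}\,u^{n-1},\qquad |u|<2\pi .
\]
I would also note that $g$ is bounded on $(0,+\infty)$ and $g(u)=\Orden(1/u)$ as $u\to+\infty$, since $e^{\frac34u}/(e^u-1)$ decays exponentially.

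The heart of the proof is then a careful split of the integral. Fix $\delta$ with $0<\delta<2\pi$. On $[\delta,+\infty)$ the Gaussian factor dominates, and the elementary bound $\int_A^\infty e^{-v^2}\,dv\le\frac{1}{2A}e^{-A^2}$ shows that both $\int_\delta^\infty e^{-u^2/(16x)}|g(u)|\,du$ and each $\int_\delta^\infty e^{-u^2/(16x)}u^{n-1}\,du$ are $\Orden(x^M)$ for \emph{every} $M>0$ as $x\to0^+$. On $[0,\delta]$ I would use, for the given $N$, the decomposition $g(u)=-\sum_{n=1}^N\frac{B_n(3/4)}{n!}u^{n-1}+r_N(u)$ with $|r_N(u)|\le C_N u^{N}$ on $[0,\delta]$ (by uniform convergence of the tail series there), integrate termwise, and evaluate each model integral by the substitution $t=u^2/(16x)$:
\[
\int_0^{+\infty}e^{-\frac{u^2}{16x}}u^{n-1}\,du=2\cdot4^{\,n-1}\Gamma(n/2)\,x^{n/2},\qquad
\Bigl|\int_0^\delta e^{-\frac{u^2}{16x}}r_N(u)\,du\Bigr|\ll x^{(N+1)/2}.
\]
Collecting these contributions gives $\int_0^{+\infty}e^{-u^2/(16x)}g(u)\,du=-\sum_{n=1}^N\frac{B_n(3/4)}{n!}2\cdot4^{n-1}\Gamma(n/2)x^{n/2}+\Orden(x^{(N+1)/2})$; dividing by $4\sqrt{\pi x}$ and using $2\cdot4^{n-1}/4=2^{2n}/8$ produces exactly the stated expansion for $I(x)$, the error being $\Orden(x^{N/2})$ (half a power is lost to the factor $x^{-1/2}$).

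The only genuinely delicate point is that the power series for $g$ diverges for $u\ge2\pi$, so one cannot integrate it termwise over all of $(0,+\infty)$; the cut at $\delta<2\pi$, combined with the super-polynomial smallness of the Gaussian tails, is what makes the manipulation rigorous. Everything else is bookkeeping with Gamma-integrals and the matching of the constants, which is routine.
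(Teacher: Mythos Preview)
Your proof is correct and follows the same route as the paper: both start from the second expression for $\Phi(x)$ in \eqref{equality}, expand $\frac{e^{3u/4}}{e^u-1}-\frac1u$ via the Bernoulli-polynomial generating function, and then extract the asymptotics of the Gaussian integral. The only difference is cosmetic: the paper invokes Watson's Lemma as a black box, whereas you carry out the Watson-type argument by hand (split at $\delta<2\pi$, bound the tail by Gaussian decay, Taylor-expand on $[0,\delta]$).
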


\begin{proof}
By \eqref{equality} it suffices to show that
\begin{equation}
\frac{1}{4\sqrt{\pi x}}\int_0^{+\infty}e^{-\frac{u^2}{16x}}\Bigl(\frac{e^{\frac34 u}}{e^u-1}-\frac{1}{u}\Bigr)\,du
=\frac{1}{8\sqrt{\pi}}
\sum_{n=1}^N\frac{2^{2n}B_n(3/4)}{n!}\Gamma(n/2)x^{\frac{n-1}{2}}+\Orden(
x^{\frac{N}{2}}).
\end{equation}
From the definition of the Bernoulli polynomials ,
\begin{equation}
\frac{e^{\frac34 u}}{e^u-1}-\frac{1}{u}=\frac{1}{u}\Bigl(
\frac{ue^{\frac34 u}}{e^u-1}-1\Bigr)=\sum_{n=1}^\infty\frac{B_n(3/4)}{n!}u^{n-1}
\end{equation}
Now the result is consequence of the Watson Lemma \cite{L}*{p.~4--5}.
\end{proof}

\begin{theorem}
$S(s)$ extends to a meromorphic function with poles at $s=1$ and $s=-(2n-1)$ for $n=1$, $2$, \dots

Let $N$ be a natural number, and   $\Re s> -N-1$. Then   for $a\to0$ with $a>0$  
\begin{multline}\label{singular}
S(s)=\frac{a^{\frac{s-1}{2}}}{4\sqrt{\pi}\Gamma(s/2)}\Bigl\{
\Bigl(-\frac{2}{(s-1)^2}+\frac{C_0+\log(16\pi^2 a)}{(s-1)}\Bigr)+\\+
\sum_{n=1}^N \frac{B_n(3/4)}{n!}\frac{(4\sqrt{a})^n \Gamma(\frac{n}{2})}{s+n-1}\Bigr\}+\Orden(a^{\frac{N+\sigma}{2}}),
\end{multline}
where the implicit constant depends on $s$ and $N$.
\end{theorem}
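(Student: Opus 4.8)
The plan is to feed the asymptotic expansion of $\Phi$ from \eqref{PhiAsympt} directly into the integral defining $S(s)$, to evaluate the resulting elementary integrals in closed form, and to bound the tail. Since $\Phi(x)=\Orden(x^{-1/2}\log x)$ as $x\to0^+$, the integral $\int_0^a\Phi(x)x^{\frac s2-1}\,dx$ converges for $\Re s>1$, and on that half-plane $\Gamma(s/2)S(s)=\int_0^a\Phi(x)x^{\frac s2-1}\,dx$. Apply \eqref{PhiAsympt} with $N$ replaced by $N+1$ and write $\Phi=\Phi_{N+1}+R_{N+1}$, where $\Phi_{N+1}$ is the explicit part (the logarithmic term together with the sum over $1\le n\le N+1$) and $R_{N+1}(x)=\Orden(x^{(N+1)/2})$ as $x\to0^+$, continuous on $(0,a]$.

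For the tail, $\bigl|\int_0^a R_{N+1}(x)x^{\frac s2-1}\,dx\bigr|\le\int_0^a|R_{N+1}(x)|x^{\frac\sigma2-1}\,dx$; the integrand is $\Orden(x^{(N+1+\sigma)/2-1})$ near $0$, so this integral converges locally uniformly for $\Re s>-N-1$, defines there a holomorphic function of $s$, and is $\Orden(a^{(N+1+\sigma)/2})$ as $a\to0^+$. For the explicit part one needs only $\int_0^a x^{\beta-1}\,dx=a^{\beta}/\beta$ and $\int_0^a x^{\beta-1}\log x\,dx=a^{\beta}\bigl(\beta^{-1}\log a-\beta^{-2}\bigr)$, taken with $\beta=(s-1)/2$ for the logarithmic term and $\beta=(s+n-1)/2$ for the $n$-th power term; carrying these out, and using $2^{2n}a^{n/2}=(4\sqrt a)^{n}$, produces exactly $\dfrac{a^{(s-1)/2}}{4\sqrt\pi}$ times the braces in \eqref{singular}, save that the inner sum runs to $N+1$. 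The $n=N+1$ summand is $\Orden(a^{(N+\sigma)/2})$, so it may be absorbed into the error; this gives the asserted formula, with the sum truncated at $N$ and error $\Orden(a^{(N+\sigma)/2})$, valid a priori for $\Re s>1$.

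Now divide by $\Gamma(s/2)$. The resulting right-hand side — $a^{(s-1)/2}$ (entire) times $1/\Gamma(s/2)$ (entire) times the braces (rational in $s$), plus $\Gamma(s/2)^{-1}$ times the holomorphic tail — is meromorphic on $\Re s>-N-1$ and coincides with $S(s)$ on $\Re s>1$, hence furnishes the meromorphic continuation of $S(s)$ to that half-plane. The braces have a double pole at $s=1$ and simple poles at $s=1-n$ for $1\le n\le N$; since $1/\Gamma(s/2)$ vanishes at $s=0,-2,-4,\dots$, the poles with $n$ odd cancel, leaving a double pole at $s=1$ and simple poles at $s=1-n$ with $n$ even, i.e.\ at $s=-(2m-1)$, $m\ge1$. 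Letting $N\to\infty$ (the continuations for different $N$ agree by uniqueness) gives the meromorphic continuation to all of $\C$ with poles exactly as claimed.

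I do not expect a serious obstacle; the argument is a Watson-type term-by-term integration. The delicate points are: checking that the tail integral genuinely defines a holomorphic function on $\Re s>-N-1$, so that continuation of the identity is legitimate; keeping the constants $\tfrac1{4\sqrt\pi}$, $(4\sqrt a)^{n}$, $\Gamma(n/2)$ and the shifts $s-1$, $s+n-1$ exactly right in the two elementary integrations; and the bookkeeping of which poles of the braces survive division by $\Gamma(s/2)$. One should also note that the implied constant in $\Orden(a^{(N+\sigma)/2})$ is not uniform as $s$ approaches a pole of $S$ (in particular the pole at $s=-N$ when $N$ is odd, which lies in $\Re s>-N-1$ but is not displayed among the braces), consistent with the statement that this constant depends on $s$.
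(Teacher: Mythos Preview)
Your proposal is correct and follows essentially the same route as the paper: substitute the asymptotic expansion \eqref{PhiAsympt} of $\Phi$ into the defining integral for $S(s)$, integrate the explicit terms in closed form, and show the remainder integral is holomorphic on the claimed half-plane. Your device of expanding to order $N+1$ and then absorbing the last summand into the $\Orden(a^{(N+\sigma)/2})$ error is a nice refinement --- it makes the tail integral converge on the full range $\Re s>-N-1$ announced in the statement, whereas the paper's own argument (expanding only to order $N$) literally yields holomorphy of the tail only for $\sigma>-N$.
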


\begin{proof}
By definition, $S(s)=\Gamma(\frac{s}{2})^{-1}\int_0 ^a\Phi(x) x^{\frac{s}{2}}
\frac{dx}{x}$. We substitute the value of $\Phi(x)$ given in \eqref{PhiAsympt}. 
Since
\begin{equation}
\frac{1}{\Gamma(\frac{s}{2})}\int_0^a \frac{C_0+\log16\pi^2x}{8\sqrt{\pi x}}
x^{\frac{s}{2}}\frac{dx}{x}=\frac{a^{\frac{s-1}{2}}}{4\sqrt{\pi}\Gamma(\frac{s}{2})}
\Bigl(-\frac{2}{(s-1)^2}+\frac{C+\log(16a\pi^2)}{s-1}\Bigr)
\end{equation}
we have
\begin{multline}
S(s)=\frac{a^{\frac{s-1}{2}}}{4\sqrt{\pi}\Gamma(\frac{s}{2})}
\Bigl(-\frac{2}{(s-1)^2}+\frac{C+\log(16a\pi^2)}{s-1}\Bigr)+\\ +
\frac{1}{\Gamma(\frac{s}{2})}\int_0^a \Bigl(
\frac{1}{8\sqrt{\pi}}
\sum_{n=1}^N\frac{2^{2n}B_n(3/4)}{n!}\Gamma(n/2)x^{\frac{n-1}{2}}+R_N(x)
\Bigr)
x^{\frac{s}{2}}\frac{dx}{x}
\end{multline}
where $R_N(x)$ is continuous on $(0,1)$ and where $R_N(x)$  is $\Orden(
x^{\frac{N}{2}})$ for $x\to 0^+$.

Through  integration 
\begin{multline}
S(s)=\frac{a^{\frac{s-1}{2}}}{4\sqrt{\pi}\Gamma(\frac{s}{2})}
\Bigl(-\frac{2}{(s-1)^2}+\frac{C+\log(16a\pi^2)}{s-1}\Bigr)+\\ +
\frac{1}{\Gamma(\frac{s}{2})} \Bigl(
\frac{1}{8\sqrt{\pi}}
\sum_{n=1}^N\frac{2^{2n}B_n(3/4)}{n!}\Gamma(n/2)\frac{2a^{\frac{s+n-1}{2}}}
{s+n-1}\Bigr)+
\frac{1}{\Gamma(\frac{s}{2})}\int_0^aR_N(x)x^{\frac{s}{2}}\frac{dx}{x}.
\end{multline}

Since the integral of $R_N(x)$ is convergent and defines an analytic function for 
$\sigma>-N$, it follows that $S(s)$ extends to a meromorphic function in that range.

When  $N$ is moved then $S(s)$ is meromorphic on the plane, and it is easy to see that its poles are $s=1$ double and $s=-(2n-1)$ simple for $n=1$, $2$, \dots

Finally, \eqref{singular} follows from the easy bound of the integral of $R_N(s)$  applying that $R_N(x)=\Orden(x^{\frac{N}{2}})$.
\end{proof}

\section{Implementation.}\label{Implem}
We need arbitrary  high precision  in the computations. In fact the four terms in the decomposition $Z(s)=A(s)-P(s)+E(s)-S(s)$ are usually  very large compared with the value of $Z(s)$.  The large amount of cancellation render the   use of high precision necessary. For example when  $Z=Z(0.5+100i)$ is computed with $a=0.015$  the following values of the real parts of the four components are obtained. (The imaginary parts are similar).
\medskip

\begin{tabular}{D..{-1}D..{-1}{c}|}
\multicolumn{2}{c}{$a=0.015$}\\
\noalign{\smallskip}
\hline
\noalign{\smallskip}
$\text{Re}(A)$ & -1217647861137338225487423130072.80874450435043989\\
$\text{Re}(P)$ & 1391575559381350246767527628.7744199492782667711\\
$\text{Re}(E)$ & 18001556143696111321715567598902.9610490404231213\\
$\text{Re}(S)$ & 16782516706999391745981376941201.5941565980711322\\
\noalign{\smallskip}
\hline
\noalign{\smallskip}
$\text{Re}(Z)$ & -0.21627201127671758
\end{tabular}

If  the value of $a$ is changed then completely different values are obtained

\begin{tabular}{D..{-1}D..{-1}{c}}
\multicolumn{2}{c}{$a=0.005$}\\
\noalign{\smallskip}
\hline
\noalign{\smallskip}
$\text{Re}(A)$ & 38410059566483986070633575784051.1823990792080925\\
$\text{Re}(P)$ & -4537279027536395371164.63554410664485168750617707\\
$\text{Re}(E)$ & -71944459339240131526392135204651.586794692259377\\
$\text{Re}(S)$ & -33534399768218866428222164049435.5525794951297153\\
\noalign{\smallskip}
\hline
\noalign{\smallskip}
$\text{Re}(Z)$ & -0.21627201127671736
\end{tabular}

In the first case, with $a=0.015$,  $A$ is computed using 35 zeros of zeta and 29 values of the von Mangoldt function. In the second case,  with $a=0.005$,  the program needs 75 zeros of zeta and 9 values of the von Mangoldt function. 

Hence,  a library that computes special functions to high precision and also computes the zeros of zeta is needed. Mathematica,  may be used, although in order to compute zeros of zeta with high precision it is better to use \texttt{mpmath} \cite{FJ}, which is also  free and open source.

The term $E(s)$ is usually the major term and may be computed easily, and hence this term is employed to determine to what degree of precision  the computation must be carried out, given the approximation required on the final result. 

By adding the terms  of the sum  \eqref{A} together (all the terms greater than the current epsilon of \texttt{mpmath}), the main term is calculated. The error commited is then estimated by applying formula \eqref{error-main}.

To compute  $P(s)$,  the terms of \eqref{eqP} are added together while they are  greater in absolute value than the current epsilon of mpmath. The estimation \eqref{error46} of the error is not applied since it is unrealistic and usually large. It is instead  assumed that the error may be estimated by the first ommited term. The series \eqref{eqP} is adequately convergent if a sufficiently small $a$ is taken. 

Since a sufficiently small $a$ is always taken, the series \eqref{computE} is highly efficient for the calculation of the exponential term $E(s)$, except when $s=-2n$ where the exact value given in \eqref{valuesE} is used directly. The error in the computation of $E(s)$ is always very small and no estimation of it is computed.

The error of the expression \eqref{singular} of the singular term is difficult to obtain rigorously. It is a typical asymptotic expansion  for which  the error is usually approximately of the order of the first term omitted. 

In this case (of the singular term) the odd terms are very different in magnitude from the even terms ($|t_{2n}|<|t_{2n\pm1}|$). Therefore the terms are added together until $|t_{2n-1}|$ is smaller than the mpmath epsilon, or these terms start to increase. The estimation of the error has to take into account  the fact that  these terms are multiplied by a large factor. We estimate the magnitude of this factor, and adequately increase  the precision of mpmath in order to obtain an error of the desired magnitude. The factors must be recomputed to this new precision.   

There is another way to estimate the error. If  $Z(s)$ is computed using two different values of the parameter $a$, then the coincident digits are very probably correct. This procedure may be repeated several times.  In practice, there is a limitation to the values of $a$. A very small value of  $a$  needs many zeros of zeta (\texttt{mpmath} will compute reliably $\gamma_n$ for $1\le n\le 10^{16}$). A large value of $a$ will need many values of the von Mangoldt function $\Lambda(n)$. 

The program is now part of the standard version of  \texttt{mpmath} (version 1.1.0), from which  our code in Python may be obtained. 

The program contains  an option to provide on  estimation of the error, and another to print the values of the four numbers $A$, $P$, $E$ and $S$,  the number of zeros of zeta used on the computation and the number of values of the von Mangoldt function used.

\section{X ray and plots  of \texorpdfstring{$Z(s)$}{Z(s)}.}\label{x-ray}

\begin{figure}[H]
\begin{center}
\includegraphics[width=0.9\hsize]{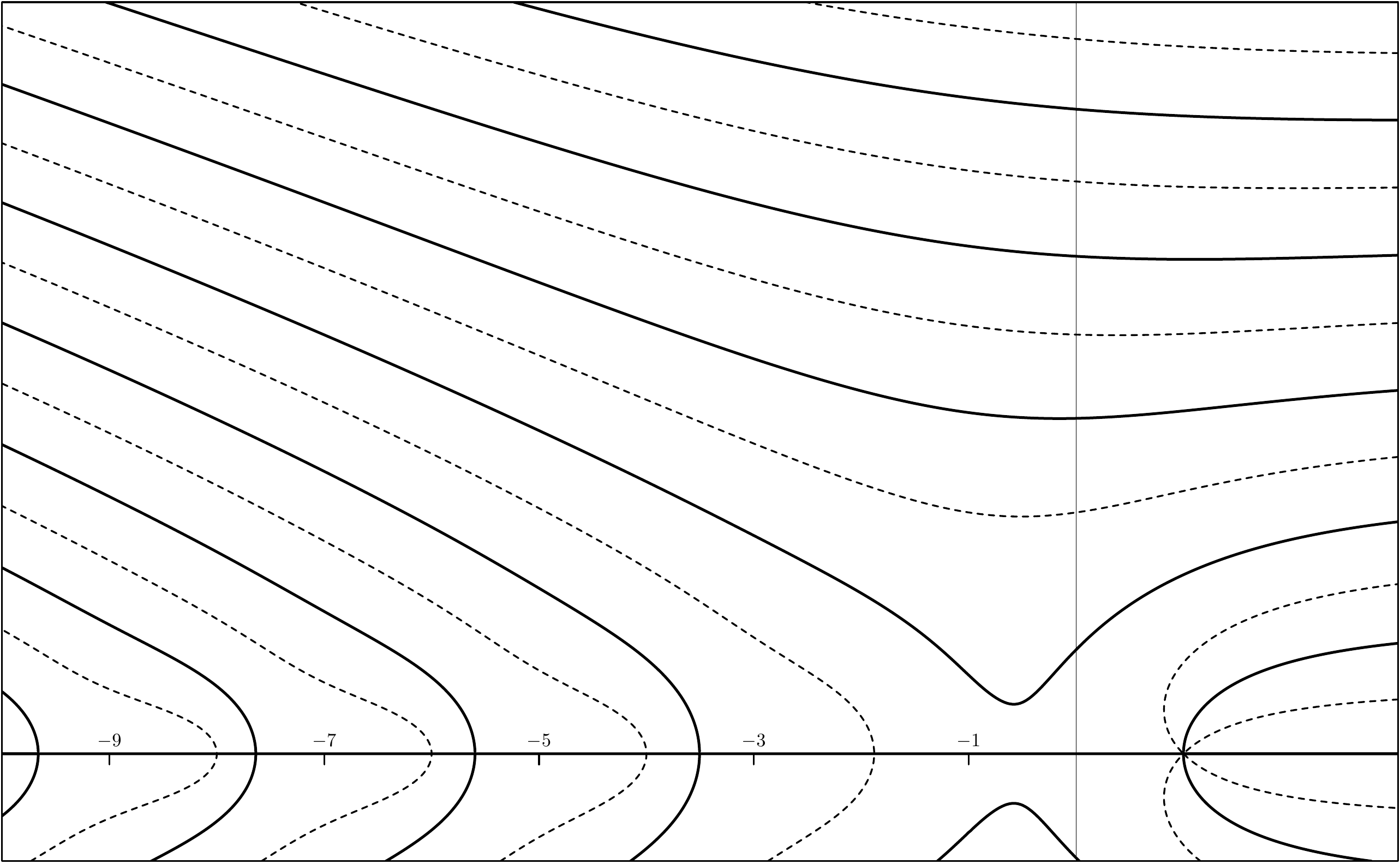}
\caption{x-ray of $Z(s)$}
\label{fragment}
\end{center}
\end{figure}

Following a tradition that started with Gauss \cite{Gauss}*{plate in p.~31}, we present the X-ray of the function $Z(s)$. This is a plot of the lines where the function $Z(s)$ is real (continuous) or purely imaginary (dotted).

The $y$-axis is also plotted. The double pole is clearly visible at $s=1$. There are some dotted lines that cut the real axis approximately at the even negative integers $-2$, $-4$, $-6$, \dots These crossing points are zeros of $Z(s)$, but it is known that  $Z(-2n)=(-1)^n \frac{8-E_{2n}}{2^{2n+3}}\ne 0$. With our program we may compute these zeros. They are certainly very near the even integers. In the following table they are the numbers $a_2$, $a_4$, $a_6$, \dots

\bigskip
\begin{center}
\begin{tabular}{|c|l||c|l|}
\hline
\multicolumn{4}{|c|}{\bfseries Real Zeros of $Z(s)$}\\
\hline\label{tableofzeros}
$a_1$ & -0.99131855134306435 & $a_2$ & -1.87934753430942316\\
$a_3$ & -3.00020218979105365 & $a_4$ & -3.99946823514428604\\
$a_5$ & -4.99999673336932148 & $a_6$ & -5.99994478082934430\\
$a_7$ & -7.00000004296610568 & $a_8$ & -8.00000245703478924\\
$a_9$ & -8.99999999946336456 & $a_{10}$ & -9.99999987797814206\\
\hline
\end{tabular}
\end{center}

In this table there are other real zeros of $Z(s)$: $a_1$, $a_3$, $a_5$, \dots  These zeros are very near the poles at $-1$,  $-3$, $-5$, \dots  The reason nothing is visible in the plot at these points is that in fact there are closed lines that join the zeros and the poles. However they are so small that they fail to appear in the plot on  this scale.

Indeed there are two  cases. At the points $-1$, $-3$, $-7$, $-11$,  the X-ray contains only a closed (almost a circle) dotted line joining the zero and pole. At the points $-5$, $-9$, there are also two additional points $c_5$, $d_5$ and $c_9$, $d_9$ at which $Z'(s)$ vanishes.  The X-ray in these cases  also has an additional closed line (at which the function $Z(s)$ is real) which joins the zeros of the derivative (see Figure  \ref{sketch}), that is not drawn to scale, since again the two loops are very different in magnitude.

\begin{figure}[H]
  \includegraphics[width=6cm]{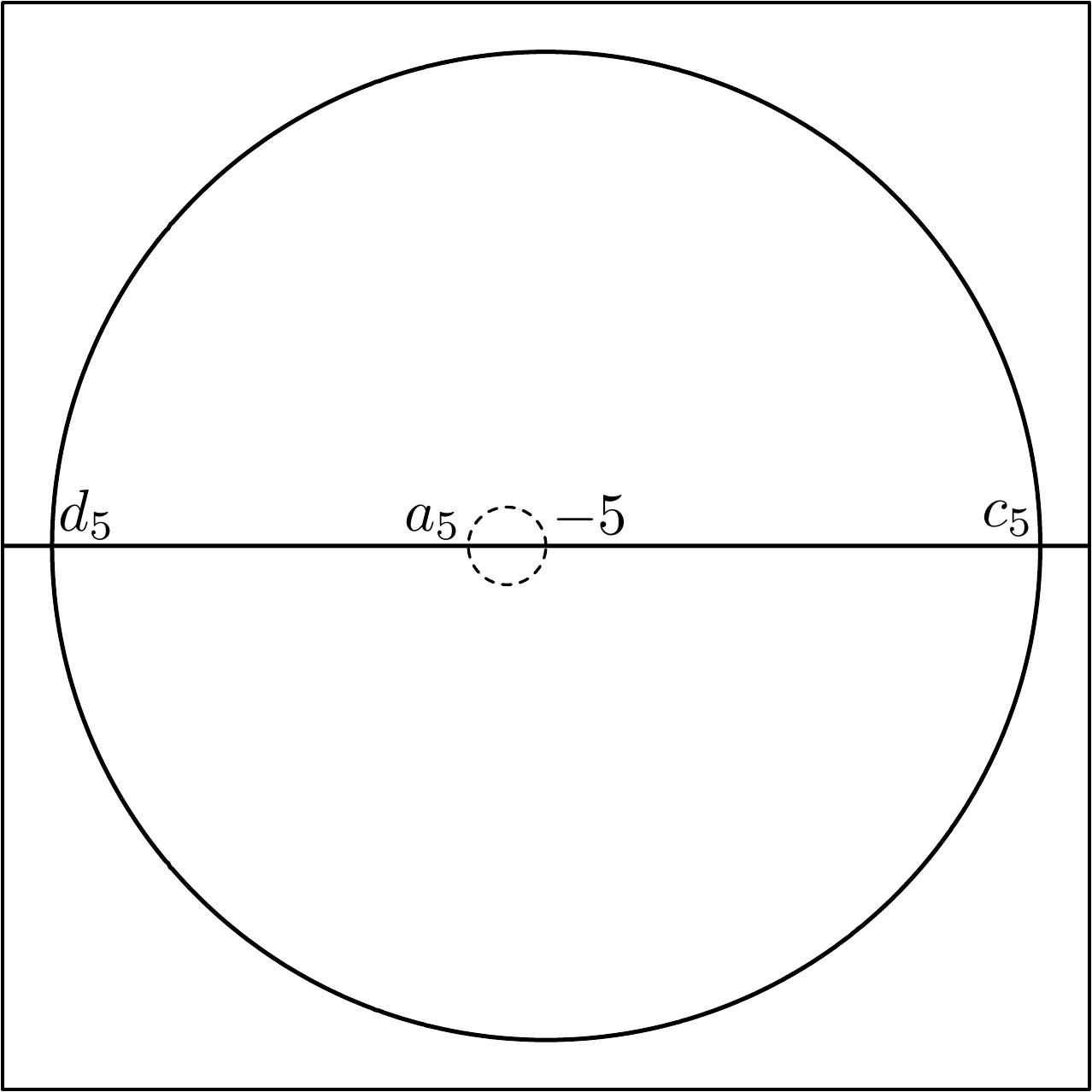}
  \includegraphics[width=6cm]{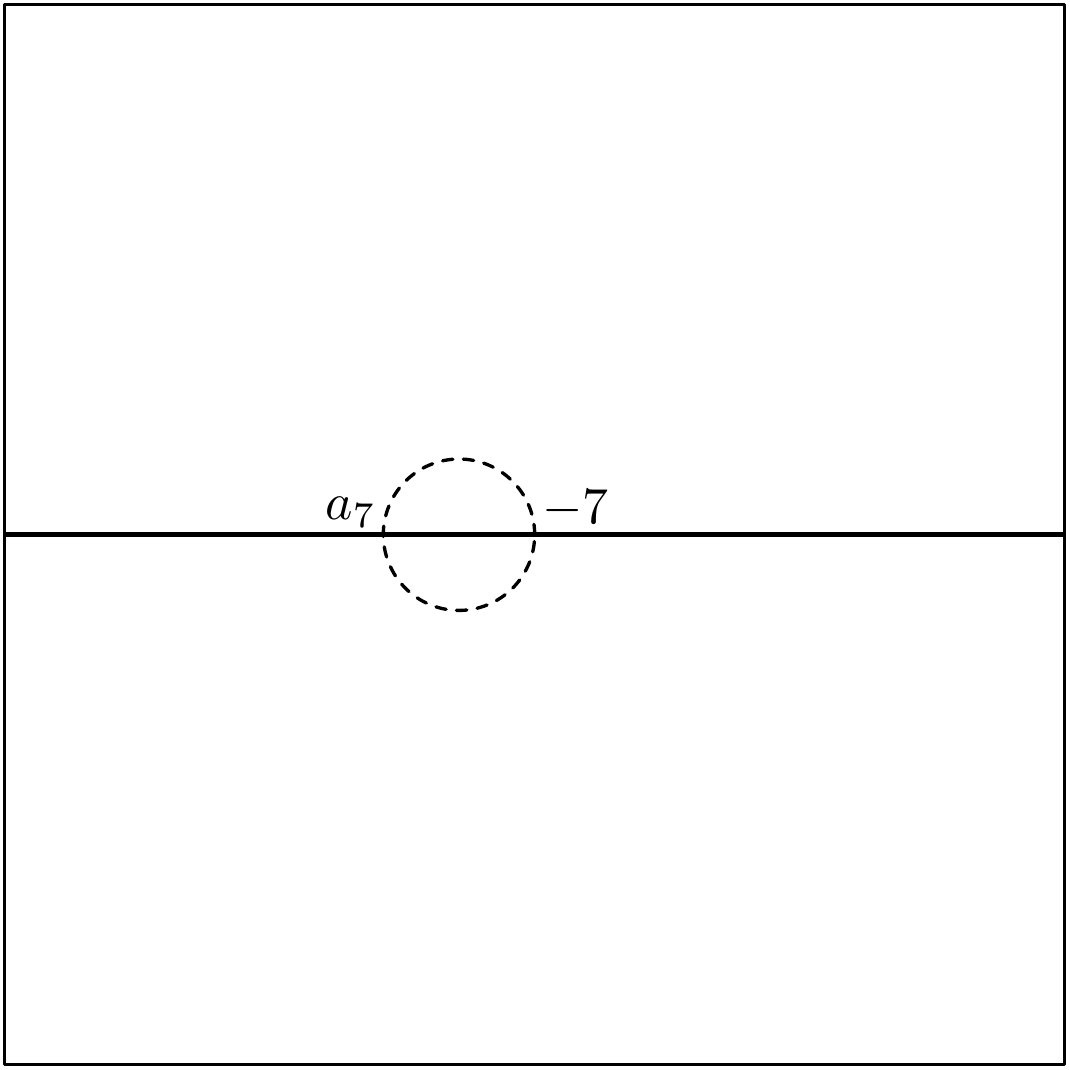}\\
  \caption{Sketch of X-ray  near $-5$ and near $-7$ (not at scale). }
  \label{sketch}
\end{figure}

Assuming the Riemann hypothesis all $\alpha_n$ are real $\alpha_n=\gamma_n$. In this case is easy to see that $Z(2+it)\approx \frac{1}{\gamma_1^2}e^{-it\log\gamma_1}$ where $\gamma_1=14.134725\dots$, the ordinate of the first non-trivial zero of $\zeta(s)$. Therefore in this case the X-ray contains parallel (real and imaginary) lines  on the right of the complex plane. On these lines the function $Z(s)$ is monotonous, and since $\lim_{\sigma\to+\infty}Z(\sigma+it)=0$  the function $Z(s)$ does not vanish on these lines. It follows that assuming the Riemann hypothesis if there are some non-real zeros of $Z(s)$ they must be situated on other lines, that do not pass through the half-plane $\sigma\ge2$. The X-ray suggest that these zeros do not exist. If the Riemann hypothesis is not true and, for example, $\alpha_n$ is not real, then $\alpha_n^{2+it}$ is not bounded for $t\to+\infty$. In this case it it is not clear to me that $Z(s)$ do not have zeros  $\rho$ with $\Re(\rho)>2$.

Voros in \cite{V2} sought plots of the function $Z(x)$ and provides only a somewhat poor plot of a related function on p.~125 based on a few computed values.  We now give  plots of a more complete nature. For a plot of $Z(x)$ for real $x$,  some useful facts are known:

\begin{figure}[H]
\begin{center}
\includegraphics[width=\hsize]{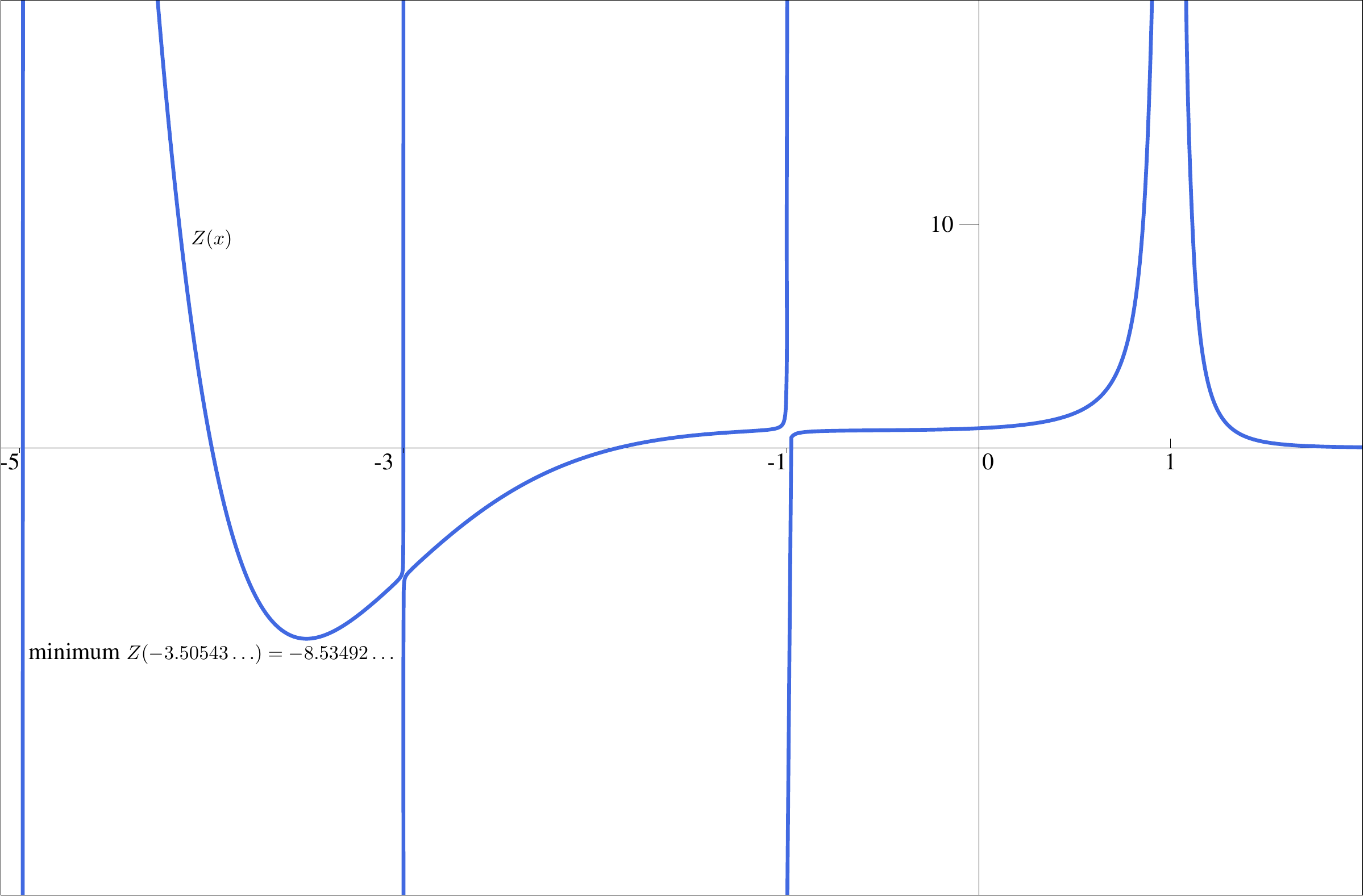}
\caption{Plot of  $Z(s)$ in the interval $(-5.1,2)$}
\label{firstplot}
\end{center}
\end{figure}

$Z$ has poles at each of the points $x=1-2n$ for $n=0$, $1$, $2$, \dots\ Therefore the graph of $Z(x)$ has asymptotes at $x=1-2n$.

Since the pole at $s=1$ is double and the main part is known to be 
$\frac{1}{2\pi(s-1)^2}-\frac{\log2\pi}{2\pi(s-1)}$,  it follows that 
$Z(1^+)=Z(1^-)=+\infty$. The rest of the poles are simple with main part 
$(-1)^n\frac{(1-2^{1-2n})}{2\pi}\frac{B_{2n}}{2n}\frac{1}{s+2n-1}$. 
All the residues are  negative. Therefore $Z(x)$ is increasing  near every pole $-1$, $-3$, $-5$, \dots

In Figure \ref{firstplot},   it can be  observed that at the poles $-3$ and $-1$,   the graph of the  function  appears similar to that of a regular function plus a straight vertical line. There is an extreme of the function at  point $-3.50543\dots$\ Finally, on the interval $(-5, -4.3)$,  the function is so large  that it disappear off the graph.  This is general, and hence the function must be plotted in small  segments,  see Figures \ref{secondplot} and \ref{secondplot2} and \ref{secondplot3}.

\begin{figure}[H]
\begin{center}
\includegraphics[width=0.92\hsize]{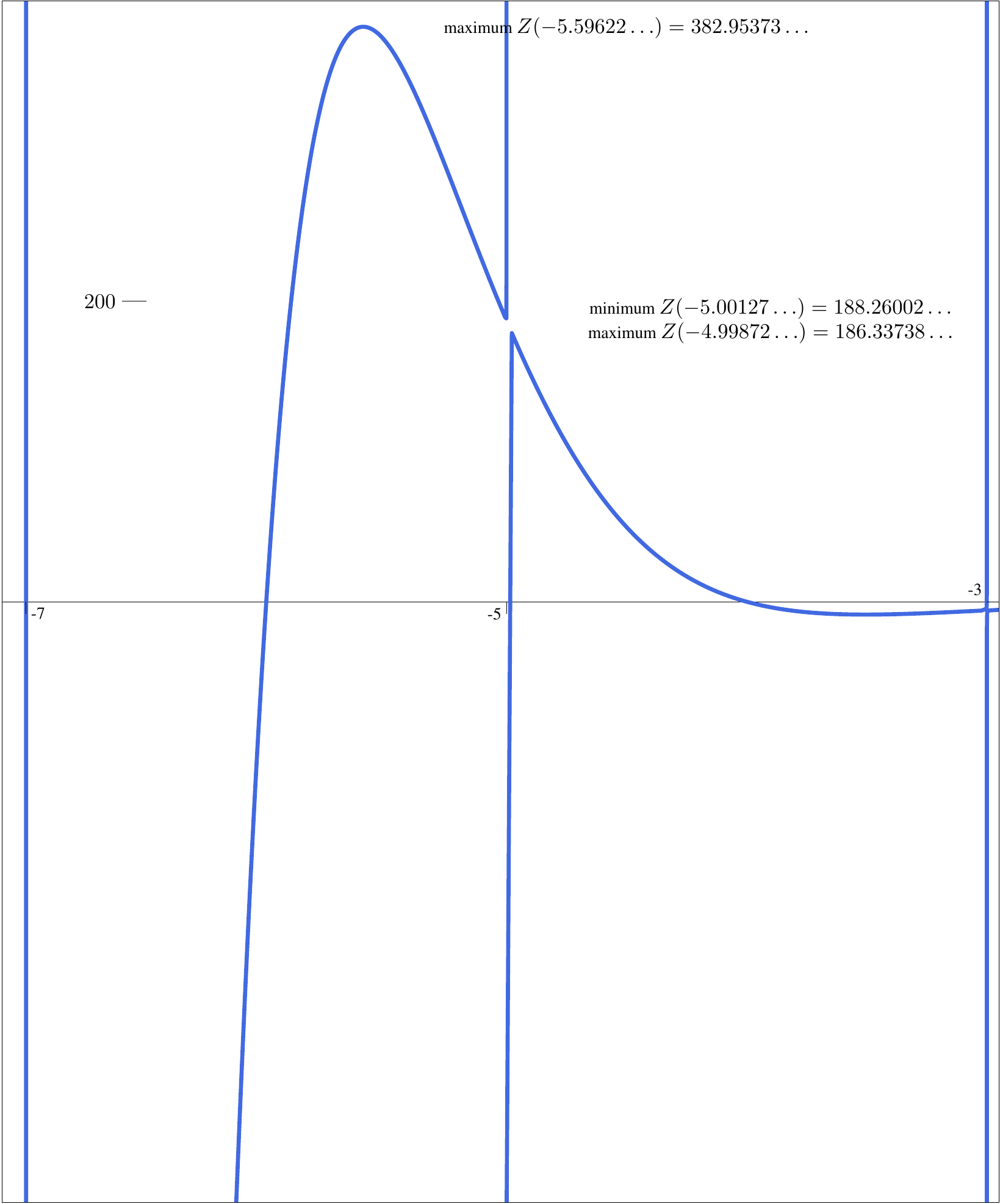}
\caption{Plot of  $Z(s)$ in the interval $(-7,-3)$}
\label{secondplot}
\end{center}
\end{figure}

The extremes of the function appear to be of two types:   A sequence $(b_{2n+1})_{n\ge1}$ that is  alternately  minimum and maximum 

\begin{figure}[H]
\begin{center}
\includegraphics[width=0.7\hsize]{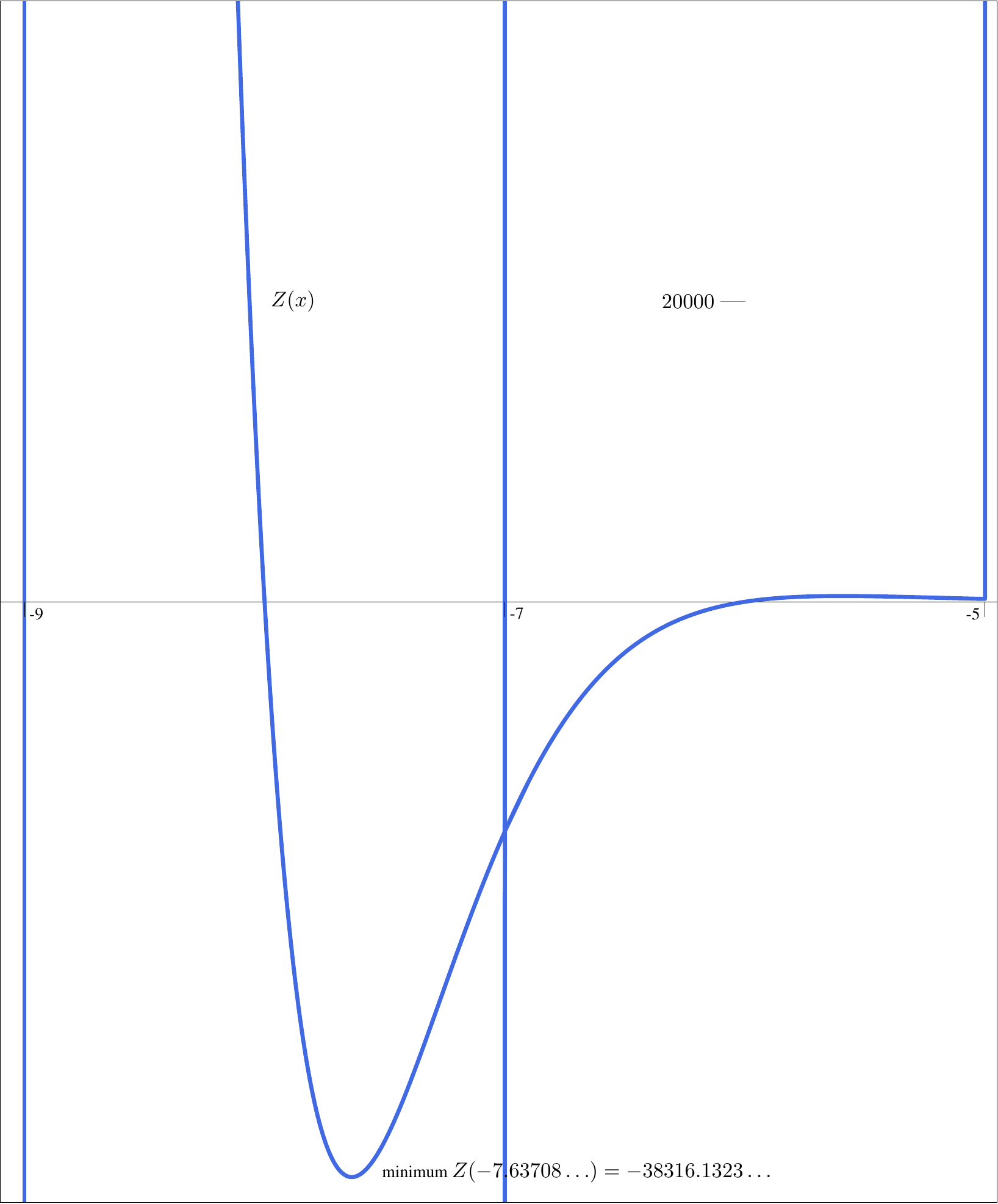}
\caption{Plot of  $Z(s)$ in the interval $(-9,-5)$}
\label{secondplot2}
\end{center}
\end{figure}

\begin{align*}
Z(x) \text{ has a minimum at } &b_3=-3.5054329756 \text{ with } Z(b_3)=-8.5349210063\\
Z(x) \text{ has a maximum at } &b_5=-5.5962251804 \text{ with } Z(b_5)=382.9537364130\\
Z(x) \text{ has a minimum at } &b_7=-7.6370844920 \text{ with } Z(b_7)=-38316.132343\\
Z(x) \text{ has a maximum at } &b_9=-9.6625777781 \text{ with } Z(b_9)=6547191.834042\\
\end{align*}

\noindent The second type of extreme consists of two sequences $c_{4n+1}$ and $d_{n+1}$, 
where $d_{4n+1}<-4n-1<c_{4n+1}$ whereby $d_{4n+1}$ is
a minimum and $c_{4n+1}$ a maximum. 

\begin{align*}
Z(x) \text{ has a maximum at } &c_5=-4.9987263742 \text{ with } Z(c_5)=186.337380852\\
Z(x) \text{ has a minimum at } &d_5=-5.0012721685 \text{ with } Z(d_5)=188.260023541\\
Z(x) \text{ has a maximum at } &c_9=-8.9999856651 \text{ with } Z(c_5)=2242256.38095\\
Z(x) \text{ has a minimum at } &d_9=-9.0000143345 \text{ with } Z(d_5)=2242592.17120
\end{align*}

\begin{figure}[H]
\begin{center}
\includegraphics[width=13cm]{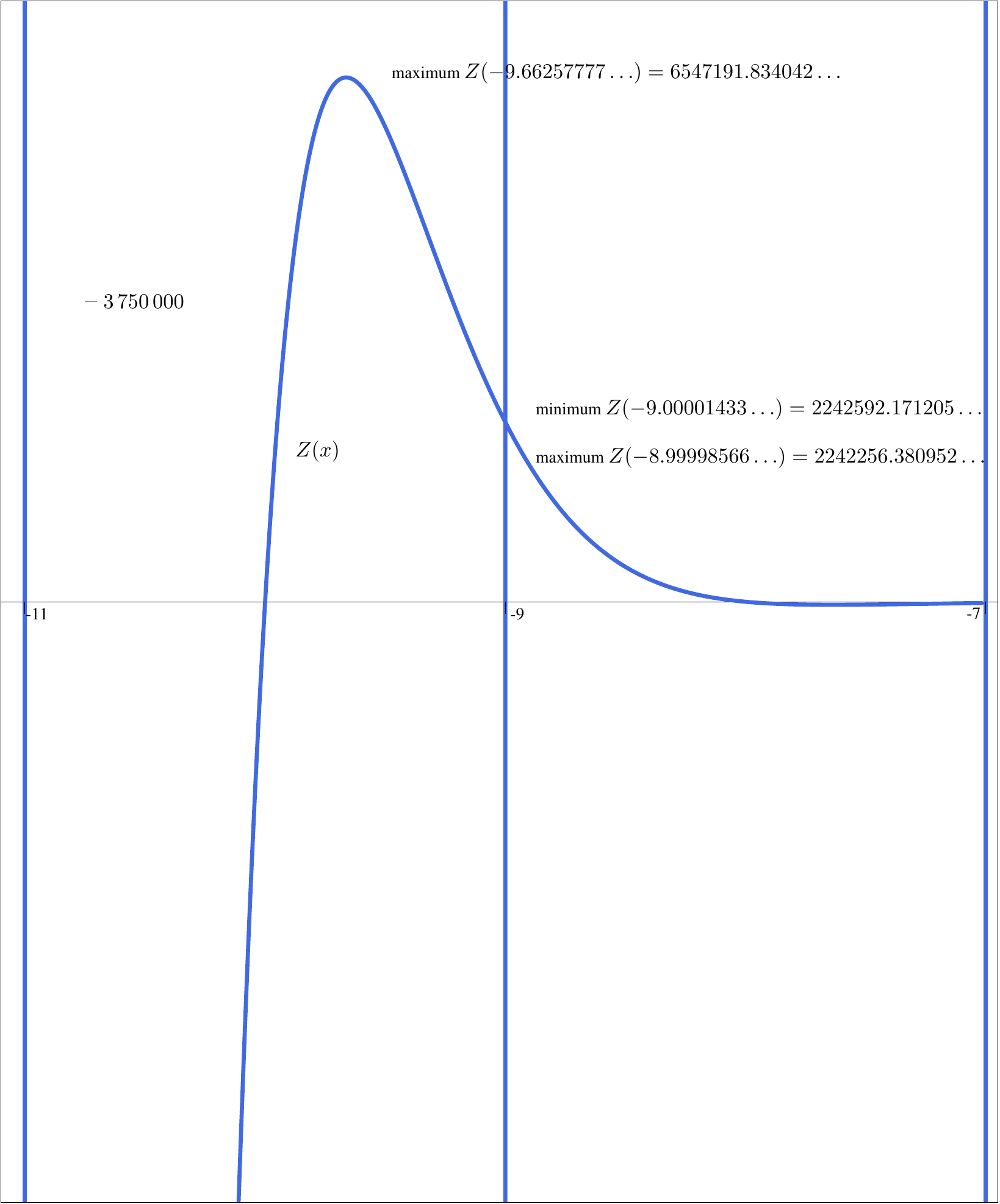}
\caption{Plot of  $Z(s)$ in the interval $(-11,-7)$}
\label{secondplot3}
\end{center}
\end{figure}

With our program and the capabilities of \text{mpmath} to compute to arbitrary precision,  these values may be computed with arbitrary precision. For example

\begin{align*}
c_5&=-4.998726374199056073034302436758537803935149729503553078275\\
d_5&=-5.001272168535916515656748009012378452508585888323143641629\\
c_9&=-8.999985665110044168150122555612459563666918804653018290127\\
d_9&=-9.000014334538926435239900710801932564852235975121014884881
\end{align*}

\begin{figure}[H]
\begin{center}
\includegraphics[width=0.9\hsize]{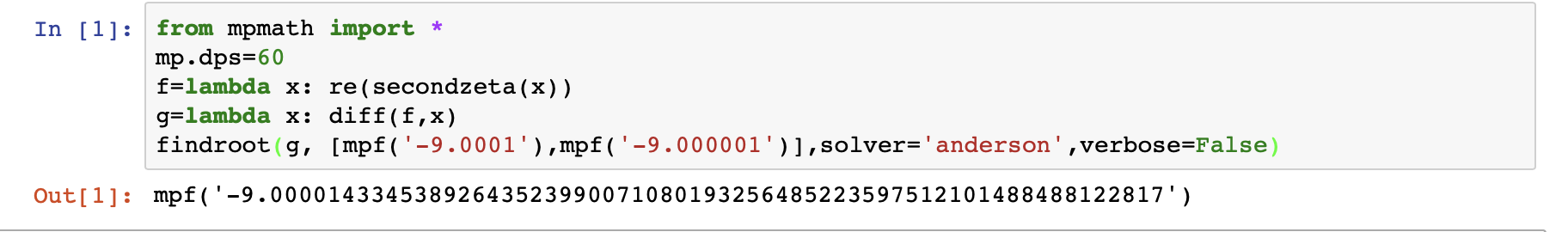}
\caption{Computing $d_9$ in Sage}
\label{computd9}
\end{center}
\end{figure}

Finally,  the apparent values of the function at the poles must coincide with the mean value of the function, or, equivalently,  the finite part of the function. These are easily computed by means of

\begin{figure}[H]
\begin{center}
\includegraphics[width=0.9\hsize]{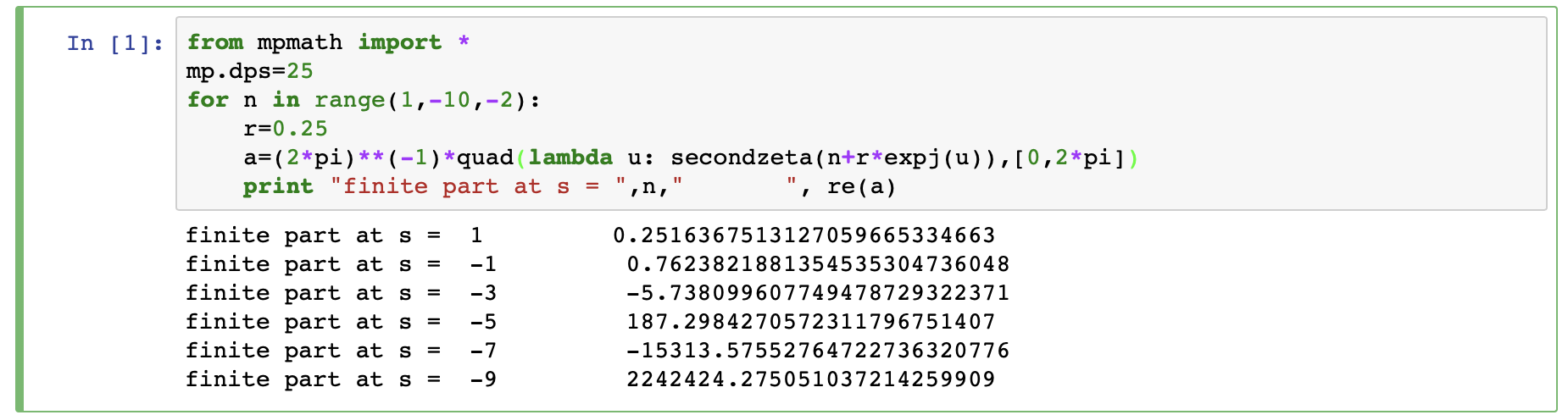}
\caption{Computation of the apparent value at $s=1-2n$}
\label{apparent}
\end{center}
\end{figure}
\noindent where the implied precision is expected to hold, but is not guaranteed.

The first  increasing and decreasing intervals of the function appear to be 
\begin{equation*}
d_9\nearrow-9\nearrow c_9\searrow b_7\nearrow-7\nearrow
b_5\searrow d_5\nearrow-5\nearrow c_5\searrow b_3\nearrow-1\nearrow 1\searrow+\infty
\end{equation*}


After seen these plots Voros  asked me to represent the function  $2Z(s)\cos\frac{\pi s}{2}$ that would be pole-free. We give here two pieces of this plot. This function vanishes at the points $-2n$ and at points $a_{2n+1}\approx-(2n+1)$

\begin{figure}[H]
\begin{center}
\includegraphics[width=0.9\hsize]{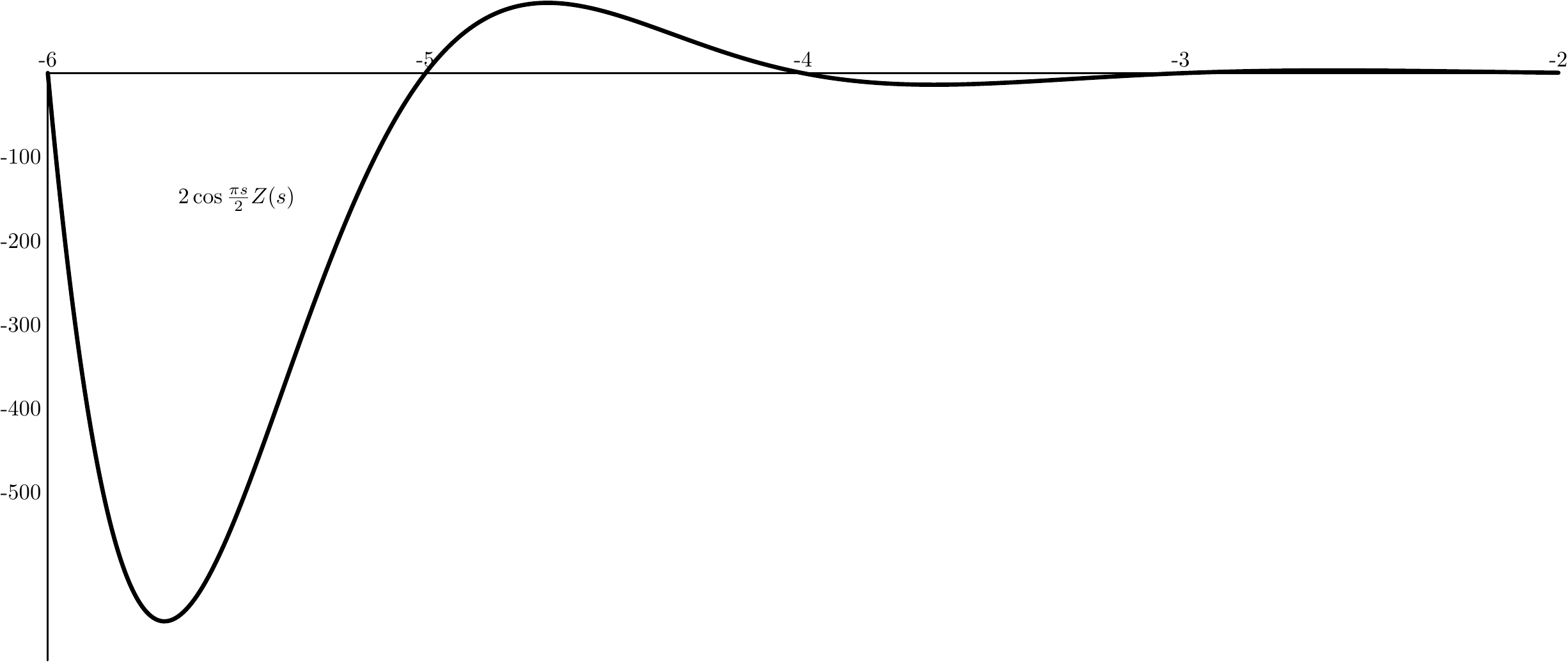}
\label{vorosplot}
\end{center}
\end{figure}
and have a pole at $s=1$
\begin{figure}[H]
\begin{center}
\includegraphics[width=0.9\hsize]{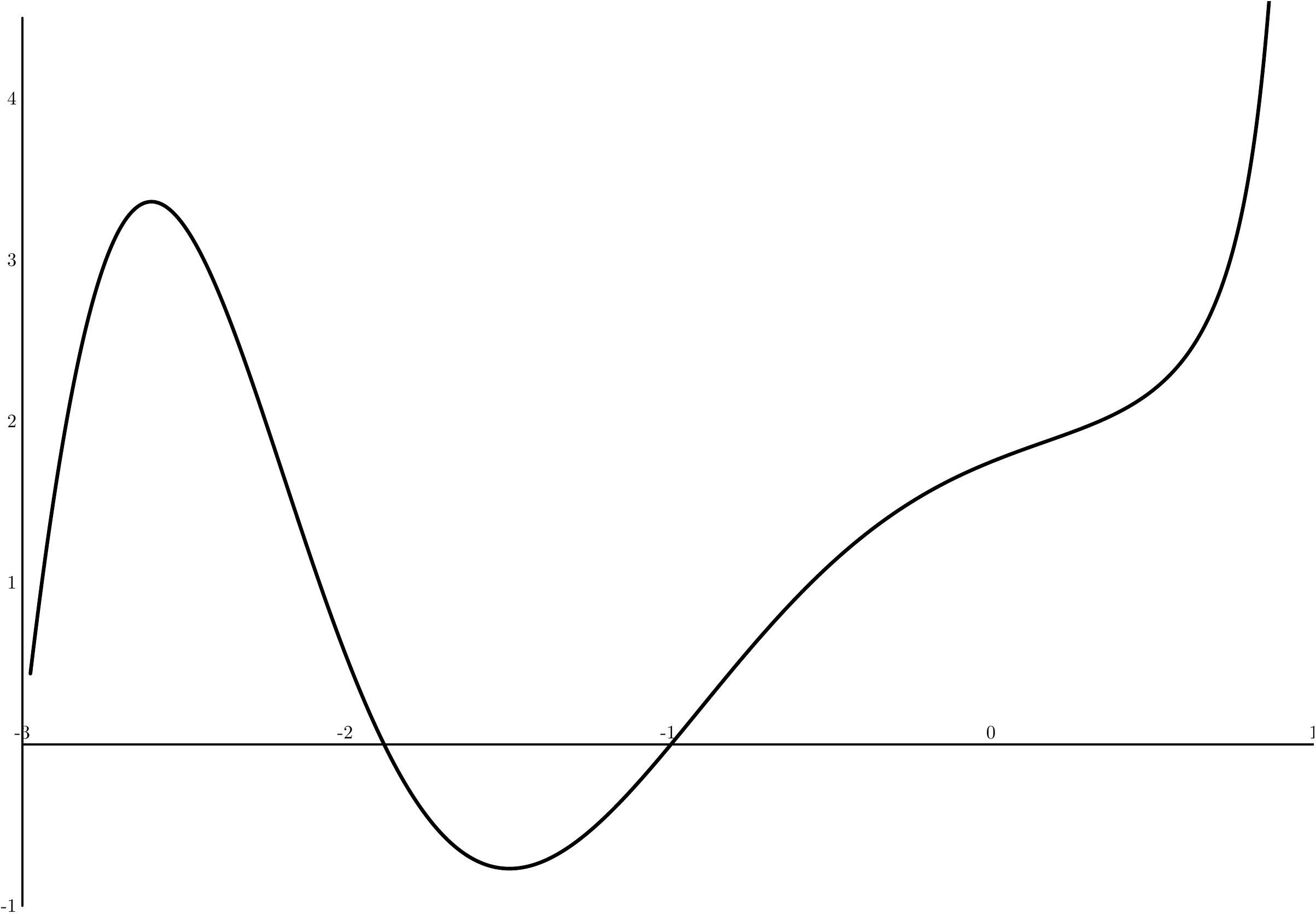}
\label{vorosplot2}
\end{center}
\end{figure}

\end{document}